\let\@wraptoccontribs\wraptoccontribs
\newtheorem{theorem}{Theorem}[section]
\newtheorem{proposition}[theorem]{Proposition}
\newtheorem{lemma}[theorem]{Lemma}
\newtheorem{corollary}[theorem] {Corollary}
\newtheorem{problem}[theorem]{Problem}
\theoremstyle{remark}
\newtheorem{remark}[theorem]{Remark}
\newtheorem{remarks}[theorem]{Remarks}
\theoremstyle{definition}
\newtheorem{definition}[theorem]{Definition}
\def\G{\Gamma}
\def\ssm{\smallsetminus}
\def\-{\overline}
\def\id{{\rm{id}}}
\def\dom{{\rm{dom}}}
\def\ran{{\rm{ran}}}
\def\r{\rho}
\def\P{\mathcal{P}}
\def\AR{\<A\mid R\>}
\def\pr{\mathcal{B}_\rho(\P)}
\def\prprime{\mathcal{B}_\rho(\P')}
\def\H{\mathcal{H}}
\def\T{\mathcal{T}}
\def\P{\mathcal{P}}
\def\G{\Gamma}
\def\<{\langle}
\def\>{\rangle}
\newcommand{\inv}{^{-1}}
\newcommand{\p}{\varphi}
\newcommand{\pinv}{{\p \inv}}
\newcommand{\til}[1]{\ensuremath{\widetilde {#1}}}
\begin{document}

\title[Permutoids, pseudogroups, and undecidability]{Undecidability and the developability of permutoids and rigid pseudogroups}
\contrib[(with an appendix by]{Benjamin Steinberg)}

\author[Martin R. Bridson]{Martin R.~Bridson}
\address{Mathematical Insitute,
Andrew Wiles Building, Woodstock Road,
Oxford,
OX2 6GG,
U.K. }
\email{bridson@maths.ox.ac.uk}
 
 \author[Henry Wilton]{Henry Wilton}
\address{Centre for Mathematical Sciences, Wilberforce Road, Cambridge, CB3 0WB, U.K.}
\email{h.wilton@maths.cam.ac.uk}

\date{25 November 2016}
 
\thanks{Bridson and Wilton both thank the EPSRC for its financial support. Bridson's work is also supported by  a Wolfson Research Merit Award
from   the Royal Society; he thanks them and   he thanks
the Erwin Schr\"odinger International Institute for Mathematical Physics in Vienna for its hospitality during the writing of this article.  }

\subjclass[2010]{20F10, 05C60, (20M18,  08A50)}

\begin{abstract}
A \emph{permutoid} is a set of partial permutations that contains the identity and is such that partial compositions, when defined, have at most one extension in the set.   
In 2004 Peter Cameron conjectured that there  can exist no algorithm that determines whether or not a permutoid based on a finite set can be completed to a finite permutation group.
In this note we prove Cameron's conjecture by relating it to our recent work on the profinite
triviality problem for finitely presented groups.  We also prove that the existence problem for finite developments of rigid pseudogroups is unsolvable.  In an appendix, Steinberg recasts these results in terms of inverse semigroups.
\end{abstract}

\maketitle

\def\G{\Gamma} 

\section{Introduction} Across many contexts in mathematics one encounters extension problems
of the following sort: given a set $S$ of partially-defined automorphisms of an object $X$,
one seeks an object $Y\supset X$ and a set of automorphisms $\hat S$  
of $Y$ such that each $s\in S$ has an extension $\hat s\in\hat S$.  
In the category of finite sets, this problem is trivial because any partial permutation of a set can be extended to a permutation of that set. Less trivially,
Hrushovski \cite{ehud} showed
that extensions always exist in the category of finite graphs.
But if one requires extensions to respect (partially defined) compositions in $S$, such existence problems become more subtle.  
In 2004
Peter Cameron \cite{cameron_extending_2004} conjectured that there does not exist an algorithm that can solve the following extension problem.

\begin{problem}\label{prob}  
{\bf{Given}} partial permutations $p_1, ..., p_m$ of a finite set $X$ (that is, bijections between subsets
of $X$) such that
\begin{enumerate}
\item
$p_1={\rm{id}}_X$, and
\item
for all $i,j$ with $\dom(p_i)\cap \ran(p_j)\neq \emptyset$,
there is at most one $k$ such that $p_k$ extends $p_i\cdot p_j$
\end{enumerate}
{\bf{decide}} whether or not there exists a finite set $Y$ containing $X$, and permutations $f_i$ of $Y$ extending 
$p_i$ for $i=1,...,m$, such that if $p_k$ extends $p_i\cdot p_j$ then $f_i \circ f_j = f_k$.
\end{problem}

We shall prove that this problem is indeed algorithmically unsolvable by relating it to our
recent work on the triviality problem for finitely presented profinite groups \cite{BWprotrivial}. In order to achieve this, we develop some formalism: a
collection of partial permutations as in Problem \ref{prob} is called
a {\em{permutoid}}; in Section \ref{s:defns}
we define morphisms, quotients, and developments of permutoids.
In this terminology, Cameron's conjecture is that there does not
exist an algorithm that can decide whether or not a finite permutoid is developable.
Cameron \cite{cameron_extending_2004} associated a permutoid to a finite group
presentation (cf.~Proposition \ref{p:half}) and observed that if the group has no finite quotients then the permutoid is not developable.  If the converse were to hold,
Cameron's conjecture would follow easily from the constructions in 
\cite{BWprotrivial}, but unfortunately it does not (see Remark \ref{r:Cam}). It is to obviate this difficulty that we introduce quotient permutoids.

In the final section of this paper we shall explain how our main construction
also can be adapted to prove a similar undecidability result for rigid pseudogroups.

Lastly, in an appendix, Benjamin Steinberg explains how the results of this paper can be recast in the language of inverse semigroups.

\section{Partial Permutations and Permutoids}\label{s:defns}


A {\em partial permutation} of a set $X$ is a bijection between two 
non-empty subsets of $X$. We denote the
domain and range of a partial permutation $p$  by $\dom(p)$ and $\ran(p)$ respectively.
By definition, $q$ {\em{extends}} $p$ if $\dom(p) \subset \dom(q)$  and $q(x) = p(x)$ for all
$x\in\dom(p)$. 
The composition $p\cdot q$ of two partial permutations $p,q$ on $X$ is 
defined if $\ran(p)\cap\dom(q)$ is non-empty: 
$p\cdot q (x) = p(q(x))$ for $x\in q^{-1}(\ran(q)\cap\dom(p))$. 

\begin{definition}\label{defP} A {\em{permutoid}} $(\Pi; X)$
is a set $\Pi$ 
of partial permutations of a set $X$ such that
\begin{enumerate}
\item $\Pi$ contains $1_X$, the identity map of $X$;
\item  for all $p,q\in\Pi$ there exists at most one $r\in\Pi$ such that $r$ extends $p\cdot q$ (if the composition exists).
\end{enumerate}
The permutoid is {\em{finite}} if $X$ is finite, and \emph{trivial} if $\Pi=\{1_X\}$.

A {\em{morphism}} of permutoids $(\Pi; X) \overset{(\phi,\Phi)}\to (\Pi'; X')$ is a pair of maps 
$\phi:\Pi\to\Pi'$ and $\Phi:X\to X'$ so that:
\begin{enumerate}
\item $\phi(1_X)=1_{X'}$;
\item $\Phi(\dom(p))\subseteq \dom(\phi(p))$
and $\phi(p)(\Phi(x)) = \Phi(p(x))$  for all $p\in \Pi$ and $x\in\dom(p)$;
\item if $r$ extends $p\cdot q$, with $p,q,r\in\Pi$, then $\phi(r)$ extends $\phi(p)\cdot\phi(q)$.
\end{enumerate}

The morphism $(\phi,\Phi)$ is an {\em{isomorphism}} if $\Phi$ and $\phi$ are bijections
and $\phi(p) = \Phi\circ p \circ \Phi^{-1}$ for all $p\in\Pi$.

The morphism $(\phi,\Phi)$ is a {\em{quotient map}} if $\Phi$ and $\phi$ are surjections.

The morphism $(\phi,\Phi)$ is an {\em{extension}} if $\Phi$ is injective.

An extension $(\Pi; X) \overset{(\phi,\Phi)}\to (\Pi'; X')$ is {\em{complete}} if $\Pi'\subseteq {\rm{Perm}}(X')$; in other words $\dom(p')=\ran(p')=X'$ for all $p'\in\Pi'$.

If a finite permutoid $(\Pi; X)$ admits a finite complete extension, then 
$(\Pi; X)$ is said to be {\em{developable}} and $(\Pi'; X')$ is called a
development.
\end{definition}

\begin{remarks} 
(1) Cameron's Conjecture asserts that 
there is no algorithm that can determine the developability 
of a finite permutoid.
\smallskip

(2) If $(\Pi; X) \overset{(\phi,\Phi)}\to (\Pi'; X')$ is an extension, then
 $\phi$ will fail to be injective precisely when $\Pi$ contains two distinct
restrictions of some $p\in\Pi'$. For example, if $\Phi$ is the identity
map on $X$ and $p$ is a permutation with at least two points $x_1,x_2$ in its support,
then we obtain an extension  $(\Pi; X) \overset{(\phi,\id)}\to (\Pi'; X)$ 
 by defining $p_i=p|_{x_i},\ \Pi=\{\id, p_1, p_2\},\ \Pi'=\{\id, p\}$ and $\phi(p_i)=p$.
\end{remarks}

\begin{definition} The {\em{universal group}} of a permutoid $(\Pi;X)$ is 
$$
\Gamma(\Pi;X) := \< \Pi \mid pq=r \text{ if $r$ extends $p\cdot q$ }\>.
$$
\end{definition}

\begin{lemma}\label{l:quot}
\begin{enumerate}
\item If $(\Pi; X) \overset{(\phi,\Phi)}\to (\Pi'; X')$ is a morphism, then
$p\mapsto \phi(p)$ defines a homomorphism of groups
$$
\phi_*  : \Gamma(\Pi;X)\to \Gamma(\Pi';X'),
$$
and if $(\phi,\Phi)$ is a quotient morphism then $\phi_*$ is surjective.
\item
If $\Pi\subseteq{\rm{Perm}}(X)$, then there is an epimorphism
$ \Gamma(\Pi;X)\to \<\Pi\> \le {\rm{Perm}}(X)$.
\item 
If a non-trivial finite permutoid $(\Pi; X)$ is developable,
 then $\Gamma(\Pi;X)$ has a non-trivial finite quotient.
\end{enumerate}
\end{lemma}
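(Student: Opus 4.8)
The plan is to read all three assertions off the presentation
$\Gamma(\Pi;X)=\langle\,\Pi\mid pq=r\text{ if }r\text{ extends }p\cdot q\,\rangle$: a homomorphism out of $\Gamma(\Pi;X)$ is exactly a map from the generating set $\Pi$ to a group that kills every defining relator, so in each case I would exhibit such a map and check the relators. For (1), send the generator $p\in\Pi$ to the generator $\phi(p)\in\Pi'$ of $\Gamma(\Pi';X')$. If $r$ extends $p\cdot q$ in $\Pi$, then by the condition that $\phi$ sends extension relations to extension relations, $\phi(r)$ extends $\phi(p)\cdot\phi(q)$ in $\Pi'$ — this composition is defined because the morphism conditions make $\Phi$ carry $\dom(q)$ into $\dom\phi(q)$ and $\ran(p)$ into $\ran\phi(p)$, so $\Phi$ maps the non-empty set $\ran(p)\cap\dom(q)$ into $\ran\phi(p)\cap\dom\phi(q)$. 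Hence $\phi(p)\phi(q)=\phi(r)$ is a defining relation of $\Gamma(\Pi';X')$, every relator of $\Gamma(\Pi;X)$ dies, and we obtain $\phi_*$; it is onto since $\phi(\Pi)=\Pi'$ generates the target. (That $1_X$ represents the identity element of $\Gamma(\Pi;X)$ follows from the relation $1_X\,p=p$, and likewise in the target, so $\phi_*$ respects identities.)

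For (2), if $\Pi\subseteq\mathrm{Perm}(X)$ then every composition $p\cdot q$ is defined and total, so ``$r$ extends $p\cdot q$'' just means $r=p\cdot q$ inside $\mathrm{Perm}(X)$; thus the assignment $p\mapsto p$ kills all relators and gives a homomorphism $\Gamma(\Pi;X)\to\mathrm{Perm}(X)$ with image $\langle\Pi\rangle$. For (3), let $(\Pi;X)\xrightarrow{(\phi,\Phi)}(\Pi';X')$ be a finite complete extension, so $\Pi'\subseteq\mathrm{Perm}(X')$ with $X'$ finite. Combining the previous two moves I would send $p\mapsto\phi(p)\in\mathrm{Perm}(X')$: if $r$ extends $p\cdot q$ then $\phi(r)$ extends $\phi(p)\cdot\phi(q)$, and as all three are genuine permutations of $X'$ this forces $\phi(r)=\phi(p)\circ\phi(q)$, so the relations are respected and we get a homomorphism $\theta\colon\Gamma(\Pi;X)\to\mathrm{Perm}(X')$ whose image is the finite group $\langle\phi(\Pi)\rangle$.

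The remaining point — the only one that is not pure bookkeeping — is that this image is non-trivial. If $\langle\phi(\Pi)\rangle$ were trivial then $\phi(p)=1_{X'}$ for all $p$, and the identity $\phi(p)(\Phi(x))=\Phi(p(x))$ together with injectivity of $\Phi$ would force $p(x)=x$ for every $p\in\Pi$ and $x\in\dom(p)$; that is, each element of $\Pi$ would be the identity on its own domain. Since $(\Pi;X)$ is non-trivial there would then be some $p\in\Pi$ with $\dom(p)\subsetneq X$ that equals $\mathrm{id}_{\dom(p)}$ — but then $p\cdot p=p$ is defined and is extended both by $p$ and by $1_X$, contradicting the permutoid axiom that a composition has at most one extension in $\Pi$. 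So $\theta$ has non-trivial finite image, which is the required finite quotient. I expect this last step to be the main obstacle: one has to notice that the permutoid axioms forbid a proper ``partial identity'', so that a non-trivial permutoid genuinely moves a point, and then use injectivity of $\Phi$ to carry that motion into the development; everything else is a routine check that the structure maps respect the defining relations of the universal group.
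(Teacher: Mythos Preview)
Your proof is correct and follows essentially the same approach as the paper. The paper dismisses (1) and (2) as ``immediate from the definitions'' and, for (3), observes that distinct elements of $\Pi$ can never be restrictions of one another (else both would extend the same composition with $1_X$), so a non-trivial permutoid has non-trivial complete extension and hence non-trivial image in ${\rm Perm}(X')$; your argument is the same idea spelled out in more detail, specialising the ``no restrictions'' observation to the pair $(p,1_X)$ to rule out proper partial identities.
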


\begin{proof} Parts (1) and (2) are immediate from the definitions. For (3), let $(\Pi; X) \overset{(\phi,\Phi)}\to (\Pi'; X')$ be a finite complete extension.  Note that if $p\in \Pi$ is not $\mathrm{id}_X$ then $p(x)\neq x$ for some $x\in X$, and hence $\phi(p)\neq \mathrm{id}_{X'}$. It follows that the image of $\phi_*$ is non-trivial, and so (3) follows from (2).
\end{proof}

\section{Cameron permutoids}

A {\em marked group} is a pair $(G,A)$ where $G$ 
is a group and $A$ is a generating set. Let $\rho$ be a positive integer. 
Let $B_\r \subset G$ be the set of elements that can be expressed as a word of length 
at most $\r$ in the generators and their inverses, and define $B_{2\r}$ similarly.
Define $p_1$ to be the identity map on $B_{2\r}$, and
for each $b\in B_\r\ssm\{1\}$ define  $p_b: B_\r\to B_{2\r}$ to be the restriction of left multiplication by $b$; that
is, $p_b(x) = bx$. Let  $\Pi_\r=\{p_b \mid b\in B_\r\}$.

\begin{lemma}\label{l:BisP} 
The pair $(\Pi_\r; B_{2\r})$ is a permutoid. If $A$ is finite then this permutoid is finite.
\end{lemma}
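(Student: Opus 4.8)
The statement splits into two independent claims, and the finiteness claim is the easy one: if $A$ is finite then there are only finitely many words of length $\le 2\r$ in $A\cup A^{-1}$, so $B_{2\r}$ is a finite set, and $\Pi_\r$ — being indexed by the finite set $B_\r$ — is a finite collection of partial permutations of $B_{2\r}$. So the real content is that $(\Pi_\r;B_{2\r})$ satisfies the two axioms of Definition~\ref{defP}.

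Before checking the axioms I would record that the ingredients make sense. For $b\in B_\r\ssm\{1\}$, the map $p_b$ is a genuine partial permutation of $X:=B_{2\r}$: its domain $B_\r$ is non-empty, its range $bB_\r$ is contained in $B_{2\r}$ because a word of length $\le\r$ times a word of length $\le\r$ has length $\le 2\r$, and it is injective since it is the restriction of left translation by $b$ in $G$. Axiom (1) of Definition~\ref{defP} then holds for free, since $p_1$ was defined to be $1_X$. It is also convenient to note that $p_c(1)=c$ for every $c\in B_\r$ (trivially for $c=1$, since $p_1=1_X$), so the assignment $c\mapsto p_c$ is injective on $B_\r$.

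The one point that requires an argument is axiom (2). Fix $p_a,p_b\in\Pi_\r$ for which the composition $p_a\cdot p_b$ is defined, and suppose $r=p_c\in\Pi_\r$ extends it. The key observation is that the identity element $1$ of $G$ always lies in $\dom(p_a\cdot p_b)$: indeed $1\in\dom(p_b)$ and $p_b(1)=b\in B_\r\subseteq\dom(p_a)$, so $1$ survives the composition, and moreover $(p_a\cdot p_b)(1)=p_a(b)=ab$ as an element of $G$. Consequently $p_c(1)=ab$; but $p_c(1)=c$, so $c=ab$ in $G$. Since $c\mapsto p_c$ is injective on $B_\r$, this shows that the extending $r$, if it exists at all, is uniquely determined by the pair $(p_a,p_b)$. (One may also observe, though it is not needed, that such an $r$ exists precisely when $ab\in B_\r$.) That is axiom (2).

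I do not expect a genuine obstacle: this is a direct verification. The only place demanding a little care is the asymmetry built into the setup — $p_1$ has domain all of $B_{2\r}$, whereas every other $p_b$ has the strictly smaller domain $B_\r$ — and the argument above neutralises this by systematically testing domains and values of compositions at the group identity $1$, which belongs to every domain that occurs. A small amount of case-checking (according to whether $a$ or $b$ equals $1$) is hidden in the identities $p_b(1)=b$ and $(p_a\cdot p_b)(1)=ab$, but it is entirely routine.
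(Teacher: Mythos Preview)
Your proof is correct and follows essentially the same route as the paper's. The paper's one-line argument is that left multiplication is a free action---``each element $g\in G$ is uniquely determined by its action by left multiplication on any point $x\in G$''---so at most one $p_c$ can extend $p_a\cdot p_b$; you make the same point concretely by evaluating at the specific witness $x=1$, which is a perfectly good (and slightly more explicit) version of the same idea.
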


\begin{proof} Each element $g\in G$ is uniquely determined by its action by left multiplication
on any point $x\in G$. Thus, for all $b,b'\in  B_\r$,
if $bb'$ lies in $ B_\r$ then $p_{bb'}$ is the unique element of $\Pi_\r$ extending $p_b\cdot p_{b'}$,
and if not then no element of $\Pi_\r$ extends $p_b\cdot p_{b'}$.
\end{proof}

\begin{definition}\label{d:B}
Given a marked group $(G,A)$  and a positive integer $\rho$, $(\Pi_\r; B_{2\r})$ is called\footnote{in recognition of the fact that Peter Cameron considered these objects in \cite{cameron_extending_2004}}
 a \emph{Cameron permutoid}.  If $\mathcal{P}\equiv \langle A\mid R \rangle$ is a finite presentation for a group $G$, then we write $\pr$ to denote the Cameron permutoid $(\Pi_\r; B_{2\r})$.
\end{definition}

\begin{remark}\label{algo}
It is important to note that, in order to construct $\pr$ from a finite presentation
$\P$, one needs to be able to 
calculate which words of length at most $\rho$ in the generators represent equal elements of the group,
and for each pair of such elements $b,x$, one needs to calculate $bx$. This can be achieved
in an algorithmic manner {\em{provided}} that one has a solution to the word problem in $|\P|$.
And in order to achieve the construction for all $\rho>0$ and all presentations in a class $\mathfrak{P}$,
one needs a {\em{uniform solution}} to the word problem for the groups in  $\mathfrak{P}$.
\end{remark}

We have arranged the definitions so that the following lemma is obvious.

\begin{lemma}\label{l:onto}
For all presentations $\P \equiv \AR$ and $\P'\equiv \<A\mid R'\>$ with $R\subset R'$, the natural epimorphism $|\P|\overset{\pi}\to |\P'|$ induces a quotient map of permutoids $\pr\overset{(\phi, \Phi)}\to\prprime$, where $\Phi$ is the restriction of $\pi$ to $B_{2\r}$ and $\phi(p_b) = p_{\pi(b)}$.
\end{lemma}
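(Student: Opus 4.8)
The plan is to verify directly that the pair $(\phi,\Phi)$ satisfies the three defining conditions of a permutoid morphism from Definition \ref{defP}, and then observe that $\phi$ and $\Phi$ are surjective because $\pi$ is. Throughout, write $G=|\P|$ and $G'=|\P'|$; denote by $B_\r, B_{2\r}\subset G$ and $B_\r', B_{2\r}'\subset G'$ the respective ball-sets, so that $\prprime$ is built from $(G',A)$. The key point that makes everything go through is that $\pi$ is an epimorphism of \emph{marked} groups: it sends a word of length $\le k$ in $A^{\pm 1}$ to the element of $G'$ represented by the same word, hence $\pi(B_k)=B_k'$ for every $k$, and in particular $\Phi:=\pi|_{B_{2\r}}$ genuinely maps $B_{2\r}$ onto $B_{2\r}'$ and $\phi(p_b):=p_{\pi(b)}$ is a well-defined map $\Pi_\r\to\Pi_\r'$.

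First I would check condition (1): since $p_1$ is by definition the identity map on $B_{2\r}$ and $1\in G$ maps to $1\in G'$, we have $\phi(p_1)=p_{\pi(1)}=p_1=1_{B_{2\r}'}$, as required. Next, condition (2): fix $p_b\in\Pi_\r$ and $x\in\dom(p_b)$. For the Cameron permutoid we have $\dom(p_b)=B_\r$ (for $b\ne 1$) and $p_b(x)=bx$, so $\Phi(\dom(p_b))=\pi(B_\r)=B_\r'=\dom(p_{\pi(b)})=\dom(\phi(p_b))$, giving the inclusion of domains; and $\phi(p_b)(\Phi(x))=\pi(b)\pi(x)=\pi(bx)=\Phi(p_b(x))$ because $\pi$ is a homomorphism. (The case $b=1$ is the identity and is trivial.) Finally, condition (3): suppose $p_c\in\Pi_\r$ extends $p_b\cdot p_{b'}$. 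By the argument in the proof of Lemma \ref{l:BisP}, this happens precisely when $bb'\in B_\r$ and $c=bb'$ in $G$. Applying $\pi$ gives $\pi(c)=\pi(b)\pi(b')$ in $G'$ with $\pi(c)\in B_\r'$, so again by Lemma \ref{l:BisP} the element $p_{\pi(c)}=\phi(p_c)$ is the (unique) element of $\Pi_\r'$ extending $p_{\pi(b)}\cdot p_{\pi(b')}=\phi(p_b)\cdot\phi(p_{b'})$. Hence $(\phi,\Phi)$ is a permutoid morphism.

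It remains to see that it is a quotient map in the sense of Definition \ref{defP}, i.e.\ that $\Phi$ and $\phi$ are surjective. Surjectivity of $\Phi=\pi|_{B_{2\r}}$ onto $B_{2\r}'$ is exactly the statement $\pi(B_{2\r})=B_{2\r}'$ noted above, which holds because every element of $B_{2\r}'$ is represented by some word of length $\le 2\r$ in $A^{\pm1}$ and that word also represents an element of $B_{2\r}\subset G$ mapping onto it. Surjectivity of $\phi$ follows: every element of $\Pi_\r'$ is of the form $p_{b'}$ with $b'\in B_\r'$, and choosing $b\in B_\r$ with $\pi(b)=b'$ gives $\phi(p_b)=p_{b'}$. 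This completes the proof, and by Lemma \ref{l:quot}(1) we also obtain the induced epimorphism $\Gamma(\pr)\to\Gamma(\prprime)$, which is presumably the point for which this lemma will be used.

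I do not expect any real obstacle here — the lemma is, as the authors say, arranged to be obvious — but the one place demanding a little care is the bookkeeping around the word ``natural'': one must be explicit that $\pi$ is a morphism of marked groups (induced by $\id_A$), so that $\pi$ is length-non-increasing and $\pi(B_k)=B_k'$ for all $k$; without this, neither the well-definedness of $\phi$ nor the surjectivity of $\Phi$ is immediate. A secondary subtlety worth a sentence is the degenerate case $b=1$ versus $b\ne1$ in the definition of $\Pi_\r$ (where $p_1$ has domain $B_{2\r}$ rather than $B_\r$), but this is handled trivially since $\phi$ sends $p_1$ to $p_1$.
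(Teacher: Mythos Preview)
Your proposal is correct and matches the paper's intent: the paper gives no proof at all, merely remarking that ``we have arranged the definitions so that the following lemma is obvious,'' and your argument is precisely the routine verification the authors are inviting the reader to carry out. Your observation that the crux is $\pi(B_k)=B_k'$ (since $\pi$ is induced by $\id_A$) is exactly the point that makes everything go through.
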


If $\rho$ is large enough then there is a natural isomorphism $\Gamma(\pr)\cong |\P|$.  In order to see this, we need the following well-known triangulation procedure.

\begin{lemma}\label{l:T} Let $\P\equiv\AR$ be a finite presentation, let $m$ be an integer greater than half the length
of the longest relation in $R$, let $B$ be the set of elements of $G=|\P|$ that can be expressed as
words of length at most $m$ in the free group $F(A)$,  let $T$ be the set of words $w\in F(B)$ of length three
that equal the identity in $G$, and let $\mathcal T \equiv \<B\mid T\>$.
Then, the natural map $A\to B\subset G$ induces an isomorphism $|\P|\to |\mathcal{T}|$.
\end{lemma}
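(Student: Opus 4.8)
The plan is to exhibit mutually inverse homomorphisms between $|\P|$ and $|\mathcal T|$. In one direction, the inclusion $A \hookrightarrow B \subset G = |\P|$ sends each generator $a \in A$ to the element of $B$ represented by the length-one word $a \in F(A)$; since $|\mathcal T|$ is generated by $B$, this extends (if at all) to a unique homomorphism $\alpha \colon |\mathcal T| \to |\P|$ once we check that every relation of $\mathcal T$ holds in $G$ — but a relation of $\mathcal T$ is a length-three word $w = b_1 b_2 b_3$ in $F(B)$ that equals the identity in $G$, so this is immediate. In the other direction, each $a \in A$ lies in $B$ (here we use $m \geq 1$, which follows from the hypothesis on $m$), so we may send $a$ to the corresponding generator of $|\mathcal T|$; to see this extends to a homomorphism $\beta \colon |\P| \to |\mathcal T|$ we must verify that each relator $r \in R$ maps to the identity in $|\mathcal T|$.

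The key step is this last verification, and it is where the triangulation hypothesis on $m$ is used. Fix $r \in R$ and write $r = a_1 \cdots a_n$ with each $a_i \in A \cup A^{-1}$ and $n \le 2m$ by the choice of $m$. Split $r$ as a product of two subwords $u = a_1 \cdots a_{\lceil n/2 \rceil}$ and $v = a_{\lceil n/2\rceil + 1} \cdots a_n$, each of length at most $m$; then $u, v$, and $uv^{-1}\cdots$ — more precisely, the elements of $G$ represented by $u$, by $v^{-1}$, and by the empty word — all lie in $B$. Since $u \cdot v = r = 1$ in $G$, the length-three word $\bar u \cdot \bar v \cdot 1^{-1}$ (with $\bar u, \bar v \in B$ the elements represented by $u,v$) is a relator in $T$, so in $|\mathcal T|$ the images of $\bar u$ and $\bar v$ satisfy $\bar u\,\bar v = 1$. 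It therefore suffices to show that the image of $r$ under $\beta$ equals $\bar u\, \bar v$ in $|\mathcal T|$, i.e.\ that $\beta$ "respects" the passage from the spelling of a short word in $F(A)$ to the single element of $B$ it represents. This is proved by a straightforward induction on word length: for a word $w \in F(A)$ of length $\le m$, and any initial segment $w'$, both of length $\le m$ with $w = w' a$ ($a \in A^{\pm 1}$), the triple $\bar{w'} \cdot \bar a \cdot \bar{w}^{-1}$ is a length-three relator in $T$, so $\beta(w') \beta(a) = \beta(w)$ in $|\mathcal T|$; iterating from the empty word shows $\beta(w) = \bar w$ for all such $w$. Applying this to $u$ and $v$ gives $\beta(r) = \bar u \, \bar v = 1$.

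Finally one checks $\alpha$ and $\beta$ are mutually inverse. On the generators $a \in A$ we have $\beta(\alpha^{-1}\cdots$ — rather, $\beta \circ \alpha$ fixes each generator $b \in B$ of $|\mathcal T|$: indeed $\alpha(b) = \bar b \in G$, and by the induction of the previous paragraph $\beta(\bar b) = b$ in $|\mathcal T|$, using that $b$ itself, viewed as a length-$\le m$ word in $F(A)$ representing $\bar b$, satisfies $\beta = \operatorname{id}$ on the corresponding generator. Conversely $\alpha \circ \beta$ fixes each $a \in A$ by construction. Hence $\alpha$ and $\beta$ are inverse isomorphisms, and the composite $|\P| \xrightarrow{\beta} |\mathcal T|$ is the isomorphism induced by $A \to B \subset G$ claimed in the statement.

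The main obstacle is purely bookkeeping: one must be careful that every ad hoc identification of a "short word in $F(A)$" with "the element of $B$ it names" is witnessed by an actual length-three relator of $T$ (padding with the identity element where a genuine triple product is not available), and that the two halves $u, v$ of each relator $r$ are genuinely of length $\le m$, which is exactly what the hypothesis "$m$ greater than half the length of the longest relation" guarantees. There is no deep content beyond this combinatorial care.
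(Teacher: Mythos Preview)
Your argument is correct. Note, though, that the paper actually suppresses its proof of this lemma (it sits inside an \verb|\iffalse|\,\ldots\,\verb|\fi| block), presenting the statement as a ``well-known triangulation procedure''; so strictly speaking there is no published proof to compare against.

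That said, the paper's hidden proof is organised differently from yours. It proceeds by Tietze transformations: first adjoin a new generator for each element of $B$ together with a triangular defining relation, then add all of $T$ as redundant relations, and finally observe that each $r=a_1\cdots a_l\in R$ is freely equal to the product $(g_0a_1g_1^{-1})(g_1a_2g_2^{-1})\cdots(g_{l-1}a_lg_l^{-1})$ where $g_i$ is the element of $G$ represented by the prefix $a_1\cdots a_i$; since $r=1$ in $G$, each $g_i$ lies at distance $\min(i,l-i)\le l/2\le m$ from the identity and hence in $B$, so every factor is in $T$ and $r$ may be deleted. Your approach instead builds explicit inverse homomorphisms, splitting each relator into two halves of length $\le m$ and then proving by induction on word length that $\beta$ carries any short word to the single $B$-generator it names. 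The combinatorial core---that triangular relations among prefixes suffice---is the same, but the paper's telescoping product is a little slicker than your two-half split followed by induction, and the Tietze framework avoids the bookkeeping of checking that $\alpha$ and $\beta$ are mutually inverse. Your version has the advantage of being self-contained and of making the induced map $|\P|\to|\mathcal T|$ explicit.

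A minor presentational point: your description of $\alpha$ opens with ``the inclusion $A\hookrightarrow B$'', which is not what is needed to define a map \emph{out of} $|\mathcal T|$; the map is $b\mapsto b\in G$ on the generating set $B$, and you do arrive there, but the first sentence is misleading. The two self-corrections mid-sentence (``$uv^{-1}\cdots$ --- more precisely'' and ``$\beta(\alpha^{-1}\cdots$ --- rather'') should be cleaned up.
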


\begin{proposition}\label{p:half} If $\rho\geq 1$, then there is a natural epimorphism of groups $\Gamma(\pr)\to |\P|$, and if
$\rho$ is greater than half the length of the longest relator in $R$, this is an isomorphism.
\end{proposition}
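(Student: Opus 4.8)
The plan is to construct the epimorphism $\Gamma(\pr)\to|\P|$ directly from the generators and relations, and then, in the range $\rho>\tfrac12\max_{r\in R}|r|$, to build an inverse homomorphism using the triangulation Lemma~\ref{l:T}. For the epimorphism: by definition $\Gamma(\pr)=\langle\,\Pi_\rho\mid p_bp_c=p_{bc}\text{ whenever }bc\in B_\rho\,\rangle$. Each generator $p_b$ corresponds to an element $b\in B_\rho\subset G=|\P|$, so the assignment $p_b\mapsto b$ defines a map on generators; it respects the relations of $\Gamma(\pr)$ because if $p_{bc}$ extends $p_b\cdot p_c$ then $bc\in B_\rho$ and $bc=b\cdot c$ holds in $G$. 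Hence we get a homomorphism $\psi\colon\Gamma(\pr)\to G$. It is onto because $A\subseteq B_1\subseteq B_\rho$ (as $\rho\geq 1$), so every generator $a\in A$ of $G$ is hit by $p_a$; this uses that $p_1=\id$ is the identity element and that $p_ap_{a^{-1}}=p_1$, so the $p_a$ are invertible and their images generate.

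For the reverse direction when $\rho>\tfrac12\max_{r\in R}|r|$, I would apply Lemma~\ref{l:T} with $m=\rho$: it gives $B=B_\rho\subseteq G$, the set $T$ of length-three words over $B$ trivial in $G$, and an isomorphism $G\cong|\mathcal T|$ where $\mathcal T=\langle B\mid T\rangle$. The key observation is that $\mathcal T$ and $\Gamma(\pr)$ have the same generators — namely the elements of $B_\rho$, written $b$ in $\mathcal T$ and $p_b$ in $\Gamma(\pr)$ — and essentially the same relations. A length-three relation $b\,c\,d^{-1}\in T$ says $bc=d$ in $G$, which (since $b,c,d\in B_\rho$ and in particular $bc\in B_\rho$) is exactly a relator $p_bp_c=p_d$ of $\Gamma(\pr)$; conversely each relator $p_bp_c=p_{bc}$ of $\Gamma(\pr)$ is the length-three relation $b\,c\,(bc)^{-1}\in T$. (The degenerate relations coming from $p_1=\id$ match the generator $e$ with relation $eee^{-1}$ in the proof of Lemma~\ref{l:T}.) So $b\mapsto p_b$ defines a homomorphism $\theta\colon|\mathcal T|\to\Gamma(\pr)$, and composing with $G\cong|\mathcal T|$ gives $G\to\Gamma(\pr)$. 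One then checks $\psi\circ\theta$ and $\theta\circ\psi$ are identities on generators, so $\psi$ is an isomorphism.

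The main obstacle — and the point requiring genuine care rather than routine bookkeeping — is verifying that the relations of $\Gamma(\pr)$ and of $\mathcal T$ coincide up to the identification of generating sets, in particular that \emph{every} element of $B_\rho$ arises as a generator on both sides and that no relation of one presentation is missing from the other. On the $\mathcal T$ side one must be sure that $T$ contains all length-three trivial words over $B_\rho$, which is how Lemma~\ref{l:T} is stated; on the $\Gamma(\pr)$ side one must note that the defining relations only assert $p_bp_c=p_{bc}$ when the product lands in $B_\rho$, which matches exactly when $bc\in B=B_\rho$ in $\mathcal T$. The hypothesis $\rho>\tfrac12\max|r|$ is precisely what makes Lemma~\ref{l:T} applicable with $m=\rho$, ensuring the original relators $R$ are consequences of the length-three relations and hence need not be separately imposed. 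A secondary routine point is handling the identity generator $p_1$ and the distinction between $p_1$ as a map on $B_{2\rho}$ versus the $p_b$ as maps $B_\rho\to B_{2\rho}$, but this does not affect the group-theoretic argument.
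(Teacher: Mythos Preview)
Your proof is correct and follows essentially the same approach as the paper: construct the epimorphism $p_b\mapsto b$ directly, then for $\rho$ large enough invoke Lemma~\ref{l:T} with $m=\rho$ and observe that the presentation $\mathcal T=\langle B_\rho\mid T\rangle$ coincides, modulo the relabelling $b\leftrightarrow p_b$, with the defining presentation of $\Gamma(\pr)$. The paper's own proof is terser---it simply asserts that ``modulo an obvious change of notation this is the isomorphism of Lemma~\ref{l:T}''---while you have spelled out the matching of generators and relations that justifies that assertion.
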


\begin{proof}
By definition,
$$
\Gamma(\pr) = \< p_b\ (b\in B_\r) \mid p_{b_1}p_{b_2}=p_{b_3} \text{ if $b_1b_2=b_3$ in $|\P|$}\>,
$$
where $B_\r$ is the ball of radius $\r$ about the identity in $|\P|$ (with word metric $d_A$).  The homomorphism $\G(\pr)\to |\P|$ defined by $p_b\mapsto b$ is onto (since $\r\ge 1$ and the image of $B_1$ generates $|\P|$).  And if $\r$ is greater than half the length of the longest relator in $R$,  then modulo an obvious change of notation this is the isomorphism of Lemma \ref{l:T}. 
\end{proof} 

We need one final fact.

\begin{lemma} For all marked groups $(G,A)$ and all positive integers $\r>\r'>0$,
there is an extension of permutoids $(\Pi_{\r'}; B_{2\r'})\to (\Pi_\r; B_{2\r})$ given by
the inclusion $B_{2\r'}\hookrightarrow B_{2\r}$ and the map $\Pi_{\r'}\to \Pi_{\r}$
that extends left-multiplication from $B_{\r'}$ to $B_{\r}$.
\end{lemma}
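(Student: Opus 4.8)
The plan is to verify directly that the stated pair of maps satisfies the three conditions in the definition of a morphism of permutoids (Definition \ref{defP}), together with the injectivity of $\Phi$ that upgrades a morphism to an extension. Write $\Phi: B_{2\r'}\hookrightarrow B_{2\r}$ for the inclusion and $\phi:\Pi_{\r'}\to\Pi_\r$ for the assignment $p_b\mapsto p_b$, where on the left $p_b$ denotes the restriction of left-multiplication by $b$ to $B_{\r'}$ and on the right it denotes the restriction to $B_\r$; this is well defined because $B_{\r'}\subseteq B_\r$, so every $b\in B_{\r'}\ssm\{1\}$ also lies in $B_\r\ssm\{1\}$, and $1$ is sent to $1$.

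The first step is condition (1): $\phi(1_{B_{2\r'}})$ must equal $1_{B_{2\r}}$. This is immediate since by construction the identity element $1\in B_{\r'}$ is sent to the identity map, which on the larger set is $p_1=1_{B_{2\r}}$. Next, condition (2): for $p_b\in\Pi_{\r'}$ and $x\in\dom(p_b)=B_{\r'}$ one has $\Phi(\dom(p_b))=B_{\r'}\subseteq B_\r=\dom(\phi(p_b))$, and $\phi(p_b)(\Phi(x))=bx=\Phi(bx)=\Phi(p_b(x))$, using only that $\Phi$ is the inclusion and both versions of $p_b$ are given by left-multiplication by $b$. Third, condition (3): suppose $p_c\in\Pi_{\r'}$ extends $p_a\cdot p_b$ with $p_a,p_b,p_c\in\Pi_{\r'}$. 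By the uniqueness argument used in the proof of Lemma \ref{l:BisP}, this happens precisely when $ab=c$ in $G$; but then the same equation $ab=c$ in $G$, now read inside $\Pi_\r$, shows that $p_c=\phi(p_c)$ is the element of $\Pi_\r$ extending $p_a\cdot p_b=\phi(p_a)\cdot\phi(p_b)$. Hence all three morphism axioms hold. Finally, $\Phi$ is injective since it is an inclusion of sets, so $(\phi,\Phi)$ is an extension in the sense of Definition \ref{defP}.

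There is essentially no obstacle here; the lemma is a bookkeeping statement and its proof is a routine unwinding of definitions, the one point worth a sentence being that the characterisation of when $r$ extends $p_a\cdot p_b$ in a Cameron permutoid (namely $ab=c$ in $G$, independent of the radius) is exactly what makes condition (3) pass from $\r'$ to $\r$. I would therefore present the proof compactly, citing Lemma \ref{l:BisP} for that characterisation and otherwise just checking the four items above in order.
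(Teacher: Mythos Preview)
Your verification is correct, and in fact the paper states this lemma without proof at all, treating it as evident from the definitions. Your direct check of the morphism axioms plus injectivity of $\Phi$ is exactly the intended content; the only tiny imprecision is that $\dom(p_b)=B_{\r'}$ holds for $b\neq 1$, while $\dom(p_1)=B_{2\r'}$, but your argument goes through unchanged in that case since $\Phi(B_{2\r'})\subseteq B_{2\r}=\dom(1_{B_{2\r}})$.
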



\begin{corollary}\label{c:finite}
If $\P$ is a finite presentation of a finite group $G$ then, for all positive
integers $\rho$, the permutoid $\pr$ is developable.
\end{corollary}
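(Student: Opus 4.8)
The plan is to write down an explicit finite complete extension. Since $G$ is finite and is generated by the image of $A$, I take the proposed development to be $(\Pi';G)$, where $\Pi':=\{L_b\mid b\in B_\rho\}$ with $L_b\colon G\to G$ denoting left multiplication by $b$, together with the pair $(\phi,\Phi)$ in which $\Phi\colon B_{2\rho}\hookrightarrow G$ is the inclusion and $\phi\colon\Pi_\rho\to\Pi'$ sends $p_b\mapsto L_b$. (The map $b\mapsto p_b$ is injective on $B_\rho$ because distinct group elements act differently on $1\in B_\rho$, so $\phi$ is well defined.)

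First I would check that $(\Pi';G)$ is a permutoid: it contains $1_G=L_1$, and since left multiplications by distinct elements of $G$ are distinct permutations of $G$, the only element of $\Pi'$ that can extend — equivalently, equal — the composite $L_b\cdot L_{b'}=L_{bb'}$ is $L_{bb'}$ itself, so Definition \ref{defP}(2) holds. Every member of $\Pi'$ is a total permutation of $G$, so this permutoid is complete, and it is finite because $G$ is. Next I would verify that $(\phi,\Phi)$ is an extension: $\Phi$ is injective by construction; $\phi(1_{B_{2\rho}})=\phi(p_1)=L_1=1_G$; for $b\in B_\rho$ one has $\Phi(\dom(p_b))\subseteq G=\dom(L_b)$ and $L_b(\Phi(x))=bx=\Phi(p_b(x))$ for $x\in\dom(p_b)$; and if $p_c\in\Pi_\rho$ extends $p_b\cdot p_{b'}$ then, as in the proof of Lemma \ref{l:BisP}, $c=bb'$ in $G$, so $\phi(p_c)=L_{bb'}=L_b\circ L_{b'}$ extends $\phi(p_b)\cdot\phi(p_{b'})$. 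Thus $(\phi,\Phi)$ is a complete extension of $\pr$ by the finite permutoid $(\Pi';G)$, and $\pr$ is developable.

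There is no genuine obstacle here: the whole argument is an unwinding of the definitions, powered by the single fact that a finite group acts faithfully on itself by left translation, so that the partial permutations $p_b$ already fit together consistently inside $\mathrm{Perm}(G)$. Alternatively one could deduce the corollary from the preceding lemma by embedding $\pr$ into $(\Pi_N;B_{2N})$ for any $N$ exceeding the diameter of $(G,d_A)$, since for such $N$ the set $\Pi_N$ already consists of total permutations of $G=B_{2N}$; the direct construction above has the mild advantage of requiring no case distinction on $\rho$.
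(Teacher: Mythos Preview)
Your proof is correct and is essentially the paper's argument: both develop $\pr$ via the left regular action of the finite group $G$ on itself. The only cosmetic difference is that the paper invokes the immediately preceding lemma to pass from an arbitrary $\rho$ to some $N$ large enough that $B_N=B_{2N}=G$ (so that $(\Pi_N;B_{2N})$ is already complete), whereas you write down the complete extension $(\{L_b:b\in B_\rho\};G)$ directly --- a variant you yourself identify in your final paragraph.
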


\begin{proof}
If $\rho$ is sufficiently large then $B_\r=B_{2\r}=G$ and $\Pi_\rho < {\rm{Perm}}(G)$ is the subgroup 
consisting of left multiplications.
\end{proof}

\begin{remark} 
A permutoid defines a {\em{pree}} in an obvious manner. By definition,
a {\em{pree}} is a non-empty set $P$ with a partially defined binary operation, i.e.\ a subset $D\subseteq P\times P$ and a map $m:D\to P$. 
This terminology is due to Stallings \cite{stall_pree} (also \cite{rimlinger}); Baer \cite{baer} had earlier used the term {\em{add}} to describe such objects. Both Baer and Stallings established criteria that guarantee a pree will embed in the associated group
$$
G(P,m) := \< P \mid pq=m(p,q) \text{ for all } (p,q)\in D\>.
$$
\end{remark}

\section{Finite completions and finite quotients}

In the language of permutoids, Cameron's Conjecture (Problem \ref{prob}) is that
{\em developability is an undecidable property.}

\begin{theorem}\label{thm} There does not exist an algorithm that, given a
finite permutoid $(\Pi;X)$, can determine whether or not $(\Pi;X)$ is developable.
\end{theorem}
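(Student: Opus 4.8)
The plan is to reduce Theorem \ref{thm} to the undecidability of the profinite triviality problem, which is the main result of \cite{BWprotrivial}. The form in which I want to use that result is its \emph{effective} form: there is a recursive sequence of finite presentations $\mathcal{P}_1,\mathcal{P}_2,\dots$, together with a \emph{uniform} solution to the word problem for the groups $G_n:=|\mathcal{P}_n|$, such that each $G_n$ is either trivial or is infinite with no non-trivial finite quotient, while the set $N:=\{\,n : G_n=1\,\}$ is not recursive. The uniformity of the solution to the word problem is the crucial point — precisely the phenomenon highlighted in Remark \ref{algo} — and it is this, rather than the bare undecidability statement, that the argument below needs; securing it is really the only substantive difficulty, and it has already been handled in \cite{BWprotrivial}.

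Granting this input, the first step is to turn each $\mathcal{P}_n\equiv\langle A_n\mid R_n\rangle$ into a Cameron permutoid. Let $\rho_n$ be a positive integer larger than half the length of the longest relator in $R_n$ (computed directly from $\mathcal{P}_n$), and set $\mathcal{C}_n:=\mathcal{B}_{\rho_n}(\mathcal{P}_n)=(\Pi_{\rho_n};B_{2\rho_n})$. By Lemma \ref{l:BisP} each $\mathcal{C}_n$ is a finite permutoid, and by Remark \ref{algo}, together with the uniform solution to the word problem in the groups $G_n$, the function $n\mapsto\mathcal{C}_n$ is computable: one lists the balls $B_{\rho_n},B_{2\rho_n}\subseteq G_n$ and the finitely many products $bx$ needed to write down the partial permutations $p_b$.

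The key claim is the equivalence
$$
\mathcal{C}_n\ \text{is developable}\qquad\Longleftrightarrow\qquad G_n=1 .
$$
If $G_n=1$, then $\mathcal{P}_n$ is a finite presentation of a finite group, so $\mathcal{C}_n$ is developable by Corollary \ref{c:finite}. Conversely, suppose $G_n\neq 1$; then $G_n$ is infinite with no non-trivial finite quotient. Since $\rho_n\geq 1$, the ball $B_{\rho_n}$ contains some $b\neq 1$, and the partial permutation $p_b$ differs from $1_{B_{2\rho_n}}$ (already at the point $1$), so $\mathcal{C}_n$ is a \emph{non-trivial} finite permutoid. By Proposition \ref{p:half} and the choice of $\rho_n$, the universal group $\Gamma(\mathcal{C}_n)$ is isomorphic to $G_n$. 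If $\mathcal{C}_n$ were developable, then Lemma \ref{l:quot}(3) would yield a non-trivial finite quotient of $\Gamma(\mathcal{C}_n)\cong G_n$, contradicting the defining property of our family; hence $\mathcal{C}_n$ is not developable.

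To finish: if there were an algorithm deciding developability of finite permutoids, then applying it to $\mathcal{C}_n$ and using the computability of $n\mapsto\mathcal{C}_n$ would decide membership in $N$, contradicting the non-recursiveness of $N$. The only genuinely non-formal ingredient is the effective, uniform-word-problem form of the result of \cite{BWprotrivial}; everything else is bookkeeping assembled from Lemma \ref{l:quot}, Proposition \ref{p:half}, Corollary \ref{c:finite}, and Remark \ref{algo}. I therefore expect the ``hard part'' to be entirely upstream, in \cite{BWprotrivial}: making sure the groups witnessing undecidability have a uniformly solvable word problem and satisfy the sharp dichotomy (trivial, or infinite with no finite quotients) that makes the equivalence above exact.
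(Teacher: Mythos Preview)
Your reduction is internally sound, and you have put your finger on exactly the right pressure point: everything hinges on the \emph{sharp dichotomy} you assume (each $G_n$ is either trivial or has no non-trivial finite quotient). Given that input, the biconditional $\mathcal C_n$ developable $\Longleftrightarrow G_n=1$ is correct, for precisely the reasons you give.

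The paper, however, takes a genuinely different route that does \emph{not} require this dichotomy. It cites from \cite{BWprotrivial} only the weaker statement that, among a recursive class of biautomatic presentations, the predicate ``$|\mathcal P|$ has a non-trivial finite quotient'' is undecidable. With just that input your argument would fail at the converse direction: if $G_n$ is an infinite group that \emph{does} admit a non-trivial finite quotient $Q$, there is no reason for $\mathcal C_n=\mathcal B_{\rho_n}(\mathcal P_n)$ itself to be developable, since the surjection $G_n\to Q$ need not be injective on the ball $B_{2\rho_n}$. The paper sidesteps this via Proposition~\ref{prop}: rather than testing $\mathcal B_\rho(\mathcal P)$ alone, it enumerates (computably) all of its finitely many non-trivial \emph{quotient permutoids} and tests each for developability. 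The equivalence it proves is
\[
|\mathcal P|\ \text{has a non-trivial finite quotient}\quad\Longleftrightarrow\quad \text{some quotient permutoid of }\mathcal B_\rho(\mathcal P)\ \text{is developable},
\]
using Lemma~\ref{l:onto} and Corollary~\ref{c:finite} for the forward direction and Lemma~\ref{l:quot} plus Proposition~\ref{p:half} for the reverse.

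So: your approach is cleaner but leans on a strengthening of the cited theorem that you would need to extract explicitly from \cite{BWprotrivial} (it is not the form quoted here). The paper's approach costs the extra bookkeeping of listing quotient permutoids, but works directly from the statement of Theorem~B as given.
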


\begin{remark} It is clear that the isomorphism classes of finite permutoids form a recursive set, and
a naive search will eventually find a complete extension of a finite permutoid if such exists. The
content of the above theorem, then, is that there is no algorithm that
can enumerate the isomorphism classes of finite permutoids that do not
have a complete finite extension.
\end{remark}

In \cite[Theorem B]{BWprotrivial} we constructed a recursive set of finite presentations for biautomatic groups such that there is no algorithm that can determine which of these groups has a non-trivial finite quotient. The class of (bi)automatic groups admits a uniform solution to the word problem \cite[pp.~32, 112]{epstein_word_1992}. Theorem \ref{thm} therefore follows immediately from \cite[Theorem B]{BWprotrivial} and the next proposition.

\begin{proposition}\label{prop} Let $\mathfrak{P}$ be a class of finite presentations for groups drawn from
a class in which there is a uniform solution to the word problem. If there were an algorithm 
that could determine whether or not a finite permutoid was developable, then
there would be an algorithm that, given any presentation
$\P\in\mathfrak{P}$, could determine whether or not the group $|\P|$ had a non-trivial finite quotient.
\end{proposition}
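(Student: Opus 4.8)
The plan is to reduce the non-trivial-finite-quotient problem for $\mathfrak P$ to the developability problem, using the Cameron permutoids $\pr$ as the link. Given $\P\equiv\AR\in\mathfrak P$, first use the uniform solution to the word problem to decide whether $|\P|$ is trivial: this holds precisely when every generator $a\in A$ represents $1$, and a trivial group has no non-trivial finite quotient, so in that case the procedure halts with the answer ``no''. Otherwise fix any integer $\rho$ greater than half the length of the longest relator in $R$ and, as in Remark \ref{algo}, use the uniform solution to the word problem to construct $\pr=(\Pi_\rho;B_{2\rho})$; by Lemma \ref{l:BisP} this is a finite permutoid, it is non-trivial because $|\P|$ is, and Proposition \ref{p:half} provides an isomorphism $\Gamma(\pr)\cong|\P|$. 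Finally, run the hypothetical developability algorithm on $\pr$ and return ``$|\P|$ has a non-trivial finite quotient'' if and only if it reports that $\pr$ is developable. Each step is effective, so the proposition reduces to the assertion that, for non-trivial $|\P|$, the permutoid $\pr$ is developable if and only if $|\P|$ has a non-trivial finite quotient.

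One direction is immediate from what has already been set up. If $\pr$ is developable then, being non-trivial, Lemma \ref{l:quot}(3) furnishes a non-trivial finite quotient of $\Gamma(\pr)$, and $\Gamma(\pr)\cong|\P|$ by Proposition \ref{p:half}; thus a positive answer from the developability algorithm correctly certifies that $|\P|$ has a non-trivial finite quotient.

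The converse --- producing a development of $\pr$ from a non-trivial finite quotient of $|\P|$ --- carries the weight of the argument and is the step I expect to be the main obstacle. The natural approach begins with an epimorphism $\pi\colon|\P|\to Q$ with $Q$ finite and non-trivial: take the vertex set of the prospective development to be $Q$ (or a finite $|\P|$-set built from it), let $\Phi$ be $\pi$ restricted to $B_{2\rho}$, and set $\phi(p_b)$ to be left multiplication by $\pi(b)$ for $b\in B_\rho$. The relations of $\pr$ exhibited in the proof of Proposition \ref{p:half} translate exactly into $\phi(p_{b_1})\circ\phi(p_{b_2})=\phi(p_{b_3})$ whenever $b_1b_2=b_3$, so $(\phi,\Phi)$ is a morphism of $\pr$ onto the permutation group $\<\pi(B_\rho)\>\le{\rm{Perm}}(Q)$, and this target is complete. (One might instead route this through Lemma \ref{l:onto}, which gives a quotient map $\pr\to\prprime$ with $|\P'|=Q$ finite and $\prprime$ developable by Corollary \ref{c:finite}; but a quotient map composed with an extension need not be an extension, so this runs into the same issue.) The point that is not automatic is that an \emph{extension} requires $\Phi$ to be injective, whereas an arbitrary epimorphism onto a non-trivial finite group need not separate the finitely many elements of $B_{2\rho}$. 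The crux is therefore to arrange that $\pi$ is injective on $B_{2\rho}$: one separates, in finite quotients, the finitely many pairs of distinct points of $B_{2\rho}$ and passes to the diagonal quotient. Granting this, the equivalence holds and the displayed algorithm decides the non-trivial-finite-quotient problem on $\mathfrak P$, which proves the proposition.
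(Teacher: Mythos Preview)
Your reduction tests developability of $\pr$ itself, and your biconditional ``$\pr$ is developable $\Longleftrightarrow$ $|\P|$ has a non-trivial finite quotient'' is false in the $\Leftarrow$ direction. Unwinding the definition of a finite complete extension, $\pr$ is developable precisely when $G=|\P|$ admits an action on a finite set with a basepoint whose stabiliser avoids every non-trivial element of $B_{4\rho}$; equivalently, $G$ has a finite-index subgroup meeting $B_{4\rho}$ trivially. That is a residual-finiteness condition on the ball, not a consequence of having \emph{some} non-trivial finite quotient. Concretely, take $G=S\times\mathbb{Z}/2\mathbb{Z}$ with $S$ a finitely presented infinite simple group, generated by the generators of $S$ together with the generator of $\mathbb{Z}/2\mathbb{Z}$. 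Then $G$ has the non-trivial finite quotient $\mathbb{Z}/2\mathbb{Z}$, but every finite-index subgroup contains $S\times\{0\}$, hence contains non-trivial generators lying in $B_1\subset B_{4\rho}$; so $\pr$ is \emph{not} developable. Your proposed fix --- separate the finitely many pairs in $B_{2\rho}$ in finite quotients and pass to the diagonal --- tacitly assumes each such pair \emph{can} be separated, which is exactly what fails here. You correctly identified this injectivity issue as ``the main obstacle'', but the patch you offer does not close it.

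The paper sidesteps this entirely: rather than testing $\pr$, it enumerates (finitely many isomorphism classes of) non-trivial \emph{quotient} permutoids of $\pr$ and applies the hypothetical developability oracle to each. If some quotient $P_i$ is developable, Lemma~\ref{l:quot}(3) and (1) together with Proposition~\ref{p:half} give a non-trivial finite quotient of $|\P|$; conversely, if $|\P|\twoheadrightarrow Q$ with $Q$ finite and non-trivial, then Lemma~\ref{l:onto} and Corollary~\ref{c:finite} exhibit a developable quotient $\prprime$ among the $P_i$. You had this route in hand in your parenthetical remark but discarded it because a quotient followed by an extension is not an extension of $\pr$; the point is that one does not need $\pr$ itself to be developable --- testing the quotients is what makes the equivalence go through.
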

\begin{proof}
Given $\P\in\mathfrak{P}$, take $\rho$ to be at least half the length of the longest relator and use the solution to the word problem to construct $\pr$ (cf.\ Remark \ref{algo}). Then list representatives $P_i$ for the finitely many isomorphism classes of the non-trivial
quotient permutoids.  The proposition now follows from the claim that $|\P|$ has a non-trivial finite quotient if and only if one of the $P_i$ is
developable.

On the one hand, if one of the  $P_i$ is developable then $\Gamma(P_i)$ has a finite quotient, by Lemma \ref{l:quot}(3), and hence, by Lemma \ref{l:quot}(1),  so does $\G(\pr)$, which, by Proposition \ref{p:half}, is isomorphic to  $|\P|$.  Conversely, if $|\P|$ has a non-trivial finite quotient, with presentation $\mathcal{P}'=\<A \mid R'\>$ say, where $R\subset R'$, then $\prprime$ will be a quotient permutoid of $\pr$, and the  $P_i$
isomorphic to it will be developable, by Corollary \ref{c:finite}. 
\end{proof}

\begin{remark}\label{r:Cam}
The key observation that if $\pr$ has a complete finite extension then $|\P|$ has a non-trivial finite quotient is due to Peter Cameron \cite{cameron_extending_2004}.  Note, however, that the converse to this observation does not hold: in general $\pr$ need not inject into any finite quotient of $|\P|$, even if such quotients exist.  For instance, $\P$ may present a non-cyclic group whose finite quotients are all cyclic, such as the example of Baumslag \cite{baumslag_non-cyclic_1969}.
\end{remark}

\section{Rigid developments and pseudogroups}\label{sec: Rigid development}

{\em Pseudogroups} play an important role in many geometric contexts. 
A pseudogroup of local homeomorphisms on a topological space $X$ is a collection $\H$ 
of homeomorphisms $h : U \to V$ of open sets of $X$ such that:
\begin{enumerate}
\item if $h : U \to V$ and $h':U'\to V'$ are in $\H$ then $h^{-1}$ and the composition
$h'h: h^{-1}(V\cap U')\to h'(V\cap U')$ belong to $\H$;
\item the restriction of $h$ to any open subset of $U$ belongs to $\H$;
\item $\id_X \in\H$;
\item  if a homeomorphism between open subsets of $X$ is the union of elements
of $\H$, then it too belongs to $\H$.
\end{enumerate}
We shall concentrate on the case where $X$ is a finite set with the discrete topology.

A set $\Pi$ of partial permutations of a set $X$ determines a  
pseudogroup denoted $\H_\Pi$, namely the pseudogroup 
generated by all restrictions of the elements $p\in\Pi$. For example,
the pseudogroup associated to a Cameron
 permutoid $(\Pi_\rho; B_{2\rho})$ consists of all maps $U\to V$, with
$U,V\subseteq B_{2\rho}$, that are restrictions of left-muliplications
$x\mapsto g.x$ on $G$.

If $(\Pi;X)$ is a permutoid, then by passing
from $\Pi$ to $\H_\Pi$ one loses the crucial condition \ref{prob}(2).
Correspondingly, $\H_\Pi$ can always be
embedded in the pseudogroup $\H_{\Pi'}$ associated to a set $\Pi'$ of permutations
of $X$: take any choice of extension $p'\in {\rm{Perm}}(X)$ for $p\in\Pi$.

A more substantial analogue of Problem \ref{prob} in the context of pseudogroups
arises when one restricts attention to pseudogroups that
are {\em rigid} in the sense that maps are defined by their value at any point.

\begin{definition} A permutoid $(\Pi;X)$ is {\em rigid} if 
for all $p\neq q\in\Pi$, there is no $x\in X$ such that $p(x)= q(x)$.

A pseudogroup $\H$ is {\em rigid} if $f\cup g \in \H$
whenever $f,g\in \H$ and  $f(x)=g(x)$ for some $x\in X$ (equivalently, every
element of $\H$ has a unique maximal extension).
\end{definition}

A basic example of a rigid pseudogroup is the pseudogroup $G\ltimes X$
associated to a free action of a group on a space $X$ (in our case a finite
set with the discrete topology). The elements of this pseudogroup are the 
restrictions of the transformations in the action. In close analogy with Problem 
\ref{prob}, one would like to know, given a finite rigid pseudogroup, $\H$ on $X$,
if one can embed $X$ in a finite set $Y$ so that the elements of $\H$ are restrictions
of transformations of $Y$ in a free action of a finite group $G$; in other
words we wish to embed $\H$ in some $G\ltimes Y$. When this can be done, we say that
$\H$ is {\em developable}.

\begin{theorem}\label{t:ps} 
There does not exist an algorithm that can determine whether or not a finite, rigid
pseudogroup has a finite development.
\end{theorem}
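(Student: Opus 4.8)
The plan is to mimic the proof of Proposition \ref{prop}, reducing Theorem \ref{t:ps} to \cite[Theorem B]{BWprotrivial}. Fix the recursive family $\mathfrak{P}$ of finite presentations of biautomatic groups provided by that theorem; as noted before Proposition \ref{prop}, this class has a uniform solution to the word problem. Given $\P\in\mathfrak{P}$, choose $\r$ greater than half the length of the longest relator and build the Cameron permutoid $\pr$ algorithmically (Remark \ref{algo}). Among the finitely many isomorphism classes of non-trivial quotient permutoids of $\pr$, list those $(\Pi_i;X_i)$ for which the associated pseudogroup $\H_{\Pi_i}$ is rigid; since each $\H_{\Pi_i}$ is a finite set of partial permutations of a finite set, it can be computed and tested for rigidity, so this list is produced algorithmically and every $\H_{\Pi_i}$ on it is a genuine finite rigid pseudogroup. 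The theorem will follow once we show that $|\P|$ has a non-trivial finite quotient if and only if one of the $\H_{\Pi_i}$ is developable: a hypothetical decider for developability of finite rigid pseudogroups would then decide the profinite triviality problem across $\mathfrak{P}$, contradicting \cite[Theorem B]{BWprotrivial}.

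Two preliminary facts are needed. First, the pseudogroup $\H_{\Pi_\r}$ of any Cameron permutoid is rigid: its elements are restrictions to subsets of $B_{2\r}$ of left multiplications $x\mapsto g.x$ on $G$, and if two such agree at a point then they are restrictions of the same left multiplication, whose union over the two domains again has domain and range inside $B_{2\r}$ and so lies in $\H_{\Pi_\r}$. Second, if $\P'\equiv\<A\mid R'\>$ presents a \emph{finite} group $Q$, then $\H_{\prprime}$ is developable. Indeed, Corollary \ref{c:finite} (via the extension lemma preceding it) realises $\prprime$ as a sub-permutoid of the complete permutoid consisting of all left multiplications of $Q$ acting, freely, on $Q$; chasing restrictions of left multiplications through this inclusion embeds $\H_{\prprime}$ into $Q\ltimes Q$, which is developability. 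By the first fact $\H_{\prprime}$ is also rigid, and by Lemma \ref{l:onto} it is the associated pseudogroup of a quotient permutoid of $\pr$; when $Q$ is non-trivial it is therefore one of the $\H_{\Pi_i}$ above.

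The remaining, and central, ingredient is an ``upward'' lemma: if $(\Pi;X)$ is a non-trivial finite permutoid and $\H_\Pi$ is developable, then $\Gamma(\Pi;X)$ has a non-trivial finite quotient. To prove it, fix an embedding $\H_\Pi\hookrightarrow G'\ltimes Y$ with $G'$ finite acting freely on a finite set $Y\supseteq X$ via $\Phi\colon X\hookrightarrow Y$. Because the action is free, every element of $G'\ltimes Y$ is a restriction of a \emph{unique} transformation of the action, so each $p\in\Pi\subseteq\H_\Pi$ is sent to a restriction of a unique $g_p\in G'$, and $\id_X\mapsto 1$. If $r$ extends $p\cdot q$ with $p,q,r\in\Pi$, then $p\cdot q\in\H_\Pi$ and $r$ agree on the non-empty set $\dom(p\cdot q)$; evaluating the embedding at a point of $\Phi(\dom(p\cdot q))$ and using freeness gives $g_pg_q=g_r$. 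Hence $p\mapsto g_p$ defines a homomorphism $\Gamma(\Pi;X)\to G'$. It is non-trivial: a non-trivial permutoid must contain some $p$ with $p(x)\neq x$ for some $x\in\dom(p)$, since otherwise $p$ and $\id_X$ would both extend $p\cdot\id_X=p$, violating Definition \ref{defP}(2); and then $g_p\Phi(x)=\Phi(p(x))\neq\Phi(x)$ by injectivity of $\Phi$, so $g_p\neq 1$.

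Assembling these: if some $\H_{\Pi_i}$ is developable, the upward lemma gives a non-trivial finite quotient of $\Gamma(\Pi_i;X_i)$, which is a quotient of $\Gamma(\pr)\cong|\P|$ by Lemma \ref{l:quot}(1) and Proposition \ref{p:half}, so $|\P|$ has a non-trivial finite quotient. Conversely, if $|\P|$ has a non-trivial finite quotient, present it as $\<A\mid R'\>$ with $R\subset R'$; then $\H_{\prprime}$ occurs among the $\H_{\Pi_i}$ and is developable by the second preliminary fact. This is the required equivalence, and the theorem follows. I expect the upward lemma to be the main obstacle: passing from $(\Pi;X)$ to $\H_\Pi$ discards condition \ref{prob}(2), so one must be sure that developability of $\H_\Pi$ is not strictly weaker than what is needed — it is precisely the requirement that the development be a \emph{free} action of a finite group that lets one reconstruct the homomorphism $\Gamma(\Pi;X)\to G'$. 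A secondary point requiring care is that quotient morphisms of permutoids point the wrong way for developability, which is why one enumerates quotient permutoids of $\pr$ rather than arguing about $\H_{\pr}$ directly, and why rigidity of the associated pseudogroups must be verified along the enumeration.
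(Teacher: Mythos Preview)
Your proof is correct and follows essentially the same route as the paper: reduce to \cite[Theorem B]{BWprotrivial} by constructing Cameron permutoids, enumerating their quotient permutoids whose associated pseudogroups are rigid, and establishing that $|\P|$ has a non-trivial finite quotient if and only if one of these pseudogroups is developable. Your ``upward lemma'' compresses the paper's Lemma~\ref{l:trans}(4) together with the implication (4)$\Rightarrow$(1) of Proposition~\ref{p2} into a single direct argument---bypassing the intermediate permutoid of maximal elements---but the underlying mechanism (using freeness of the $G'$-action on $Y$ to extract a well-defined homomorphism $\Gamma(\Pi;X)\to G'$) is identical to the paper's.
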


The proof of this theorem is implicit in our earlier arguments; to translate
them we need the following lemma.

\begin{lemma} \label{l:trans}
Let $\H$ be a rigid pseudogroup on a finite set $X$ and let $\Lambda\subset \H$ be
the set of maximal elements.
\begin{enumerate}
\item $(\Lambda;X)$ is a rigid permutoid.
\item $\H_{\Lambda} = \H$.
\item If $\H=\H_{\Pi}$ for some permutoid $(\Pi;X)$,
then the map that assigns each $p\in\Pi$ to its maximal
extension in $\H$ defines an extension of permutoids $(\Pi;X)\to (\Lambda;X)$.
\item If $\H$ is developable, then so is $(\Lambda;X)$ (and hence $(\Pi;X)$).
\end{enumerate}
\end{lemma}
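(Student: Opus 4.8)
The plan is to verify the four clauses in order. For the first three I would just unwind Definition~\ref{defP}, using the one structural fact behind the whole picture: in a rigid pseudogroup on a \emph{finite} set, every element has a unique maximal extension, so $\Lambda$ is exactly the set of these extensions and each $h\in\H$ is a restriction of exactly one element $\hat h\in\Lambda$. \emph{Clauses (1) and (2).} As $\id_X\in\H$ is a permutation of $X$ it is maximal, so $\id_X\in\Lambda$. If $p,q\in\Lambda$ and $r_1,r_2\in\Lambda$ both extend $p\cdot q$, then, its domain being non-empty, $r_1$ and $r_2$ agree at some point, so rigidity of $\H$ gives $r_1\cup r_2\in\H$, and maximality forces $r_1=r_1\cup r_2=r_2$; this is Definition~\ref{defP}(2), and running the same argument with ``$p(x)=q(x)$ for some $x$'' in place of ``both extend $p\cdot q$'' shows $(\Lambda;X)$ is rigid. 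For (2), any restriction of an element of $\Lambda\subseteq\H$ lies in $\H$ and $\H$ is a pseudogroup, so $\H_\Lambda\subseteq\H$; conversely each $h\in\H$ is a restriction of $\hat h\in\Lambda$, so $h\in\H_\Lambda$.

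\emph{Clause (3).} If $\H=\H_\Pi$ then $\Pi\subseteq\H_\Pi=\H$, so each $p\in\Pi$ has a maximal extension $\hat p\in\Lambda$; I would take $\phi(p)=\hat p$ and $\Phi=\id_X$. The first two morphism axioms are immediate, since $\id_X$ is its own maximal extension and $\hat p$ extends $p$. For the third, let $r$ extend $p\cdot q$ with $p,q,r\in\Pi$. Then $\hat p\cdot\hat q$ also extends $p\cdot q$ (immediate from the definitions), so $r$ and $\hat p\cdot\hat q$ are elements of $\H$ agreeing on the non-empty set $\dom(p\cdot q)$, whence $r\cup(\hat p\cdot\hat q)\in\H$ by rigidity. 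This extends $r$, so its maximal extension equals $\hat r$; therefore $\phi(r)=\hat r$ extends $\hat p\cdot\hat q=\phi(p)\cdot\phi(q)$. Since $\Phi$ is injective, $(\phi,\Phi)$ is an extension of permutoids.

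\emph{Clause (4).} A development of $\H$ furnishes a finite set $Y\supseteq X$ and a free action of a finite group $G$ on $Y$ such that every $h\in\H$ is the restriction to $\dom(h)$ of the transformation given by some $g_h\in G$. Because $\dom(h)\neq\emptyset$ and the action is free, $g_h$ is unique, and $h\mapsto g_h$ respects composition (whenever $p\cdot q$ is defined, both $g_{p\cdot q}$ and $g_pg_q$ restrict to $p\cdot q$ on its non-empty domain). Set $\Pi'=\{g_\lambda\mid\lambda\in\Lambda\}\subseteq{\rm{Perm}}(Y)$; this contains $1_Y=g_{\id_X}$, and a set of permutations containing the identity automatically satisfies Definition~\ref{defP}(2), so $(\Pi';Y)$ is a permutoid consisting entirely of permutations. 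Taking $\phi(\lambda)=g_\lambda$ and $\Phi\colon X\hookrightarrow Y$ the inclusion gives a morphism $(\Lambda;X)\to(\Pi';Y)$: axioms (1) and (2) hold because $g_\lambda$ restricts to $\lambda$ on $\dom(\lambda)$, and axiom (3) because if $r$ extends $p\cdot q$ in $\Lambda$ then $g_r$ and $g_pg_q$ agree on $\dom(p\cdot q)\neq\emptyset$, so $g_r=g_pg_q$ by freeness. As $\Phi$ is injective and $\Pi'\subseteq{\rm{Perm}}(Y)$, this is a complete finite extension, so $(\Lambda;X)$ is developable; composing it with the extension constructed in clause (3) shows $(\Pi;X)$ is developable too.

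The routine part is everything forced by Definition~\ref{defP} and the rigidity axioms; the single manoeuvre worth flagging is the passage to $r\cup(\hat p\cdot\hat q)$ in clause (3), which is where the good behaviour of maximal extensions under composition is used. The main obstacle I anticipate is clause (4): one must fix a precise meaning for ``embed $\H$ in $G\ltimes Y$'' --- a morphism of pseudogroups carrying each $h$ to a restriction of a $G$-translation --- and then extract from freeness of the action the well-defined, composition-respecting map $\lambda\mapsto g_\lambda$ on which the development of $(\Lambda;X)$ is built.
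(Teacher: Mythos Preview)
Your proof is correct and follows essentially the same approach as the paper's: for (1)--(3) you unwind the definitions using the unique-maximal-extension property of a rigid pseudogroup, and for (4) you use freeness of the $G$-action on $Y$ to extract a well-defined, composition-respecting assignment $\lambda\mapsto g_\lambda$ and then verify the morphism axioms. The paper's version is a terse two-sentence sketch of exactly this argument (with a couple of notational slips), whereas you spell out the details, including the key step $r\cup(\hat p\cdot\hat q)\in\H$ in clause~(3) and the composition of extensions at the end.
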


\begin{proof} The first three items follow easily from the
definitions. For example, if $p,q\in\Pi$ and $q(x)\in \ran(q)\cap\dom(p)$, then
the unique maximal element $r\in \H$ with $r(x)=pq(x)$ is the unique 
element of $\Pi$ such that $r$ extends $p\cdot q$. 

(4). If $G\ltimes Y$ is a finite development of $\H$, then each maximal
element $p\in \H$ is the restriction of the action of a unique $\hat{p}\in G$,
so  $\hat{r}=\hat{p}\hat{q}$ if $r$ extends $p\cdot q$. Thus 
$\Phi:X\hookrightarrow Y$ and 
 $\phi(p):=\hat{h}$ define a development of $(\Pi;X)$.
\end{proof}

Let $G$ be a group with finite generating set $A$ and let $B_r$ denote the ball of radius
$r$ about $1\in G$ in the corresponding word metric.
Earlier, we considered the permutoid $\pr=(\Pi_\rho; B_{2\rho})$. The 
associated pseudogroup $\H_{\Pi_\rho}$ consists of all maps $U\to V$, with
$U,V\subseteq B_{2\rho}$, that are restrictions of left multiplications
$\lambda_g:G\to G$.

\begin{proposition}\label{p2} Let $G$ be a group with finite presentation $\<A\mid R\>$, 
let $\rho>\frac 1 2 \max\{|r|:r\in R\}$ be an integer
and consider the permutoid $\pr =  (\Pi_\rho; B_{2\rho})$.
The following conditions are equivalent.
\begin{enumerate}
\item $G$ has a non-trivial finite quotient;
\item $\pr$ has a quotient permutoid
that is developable;
\item $\pr$ has a quotient permutoid
that  has a {\em rigid} finite development;
\item $\pr$ has a quotient permutoid whose associated pseudogroup is
rigid and developable.
\end{enumerate}
\end{proposition}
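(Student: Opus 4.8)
The plan is to prove the chain of implications $(1)\Rightarrow(3)\Rightarrow(2)\Rightarrow(1)$ together with $(1)\Leftrightarrow(4)$, which yields the equivalence of all four conditions. Throughout, exactly as in the proof of Proposition~\ref{prop}, ``quotient permutoid'' in (2)--(4) is read as \emph{non-trivial} quotient permutoid; note that $\pr$ is itself non-trivial whenever $G\ne 1$, since $\rho\ge 1$ forces $B_1$ to contain a non-identity element of $G$. The equivalence $(1)\Leftrightarrow(2)$ is in essence already at hand: it is precisely the claim established inside the proof of Proposition~\ref{prop}, that $|\P|$ has a non-trivial finite quotient if and only if one of the finitely many non-trivial quotient permutoids of $\pr$ is developable --- one direction via Lemma~\ref{l:onto} and Corollary~\ref{c:finite}, the other via Lemma~\ref{l:quot}(1),(3) and the isomorphism $\Gamma(\pr)\cong|\P|$ of Proposition~\ref{p:half} (this is where $\rho>\tfrac12\max\{|r|:r\in R\}$ enters). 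The implication $(3)\Rightarrow(2)$ is immediate, because a rigid finite development of a quotient permutoid is in particular a finite complete extension of it. And $(4)\Rightarrow(1)$ follows from Lemma~\ref{l:trans}: if a non-trivial quotient permutoid $P$ of $\pr$ has rigid, developable associated pseudogroup $\H_P$, then Lemma~\ref{l:trans}(4) applied with $\H=\H_P$ and $\Pi=P$ shows that $(P;B_{2\rho})$ is developable, so (2) and hence (1) hold.

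It remains to produce, from any non-trivial finite quotient $Q$ of $G$, a single non-trivial quotient permutoid of $\pr$ witnessing both (3) and (4). Fix a presentation $\P'=\<A\mid R'\>$ of $Q$ with $R\subset R'$; by Lemma~\ref{l:onto} the epimorphism $G\to Q$ induces a quotient map $\pr\to\prprime$, and $\prprime$ is non-trivial since $Q\ne 1$. For (3): as $Q$ is finite, choose an integer $N>\rho$ for which the ball of radius $N$ in $Q$ is all of $Q$, so that the Cameron permutoid $\mathcal{B}_N(\P')$ is literally $(\Pi_N;Q)$, the set of all left-multiplications of $Q$ acting on itself. The inclusion $B_{2\rho}\hookrightarrow Q$ together with the map sending each partial left-multiplication of $\prprime$ to its extension to a permutation of $Q$ defines an extension of permutoids $\prprime\hookrightarrow(\Pi_N;Q)$; this extension is complete because every element of $\Pi_N$ is a permutation of $Q$, and $(\Pi_N;Q)$ is rigid because the left-regular action of $Q$ is free ($bx=b'x$ forces $b=b'$). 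Hence $(\Pi_N;Q)$ is a rigid finite development of $\prprime$, which is (3). For (4): by the description recalled just before the proposition, the pseudogroup associated to $\prprime$ consists of the restrictions, to subsets of $B_{2\rho}$, of the left-multiplications $\lambda_g\colon Q\to Q$; it is rigid because restrictions of distinct left-multiplications never agree at a point (again $g_1x=g_2x$ forces $g_1=g_2$), and it is developable because the inclusion embeds it into the pseudogroup $Q\ltimes Q$ of the free left-regular action of $Q$ on itself. Thus $\prprime$ witnesses (3) and (4), completing $(1)\Rightarrow(3)$ and $(1)\Rightarrow(4)$, and with them the whole equivalence.

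I expect the one slightly delicate point to be the passage to the large radius $N$ in (3): the quotient permutoid $\prprime$ is itself almost never complete (partial left-multiplications defined on $B_\rho$ are not permutations of $B_{2\rho}$), so one cannot conclude at radius $\rho$ and must instead enlarge the ball until the Cameron permutoid literally \emph{becomes} the left-regular action of the finite group $Q$, at which stage both the completeness and the rigidity of the resulting development are transparent. Everything else is a matter of unwinding the definitions and invoking Lemmas~\ref{l:onto},~\ref{l:quot},~\ref{l:trans}, Corollary~\ref{c:finite} and Propositions~\ref{p:half},~\ref{prop} already in hand.
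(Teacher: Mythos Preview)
Your proof is correct and follows essentially the same route as the paper's: the equivalence of (1) and (2) from Proposition~\ref{prop}, the trivial implication $(3)\Rightarrow(2)$, the construction of a rigid development from the left-regular action of a finite quotient $Q$ for $(1)\Rightarrow(3)$ and $(1)\Rightarrow(4)$, and Lemma~\ref{l:trans}(4) to close the loop from (4). The only cosmetic difference is that the paper records $(4)\Rightarrow(3)$ directly (since the development produced by Lemma~\ref{l:trans}(4) lands in a free $G$-action and is therefore rigid), whereas you route $(4)\Rightarrow(2)\Rightarrow(1)$; both are fine.
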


\begin{proof} We proved the equivalence of (1) and (2) in the proof of Proposition \ref{prop}.
(3) implies (2), trivially, and (1) implies (3) because if $Q$ is a finite
quotient of $G$, then the action of $Q$ by left-multiplication on itself provides
a rigid development for some quotient $P$  of $\pr$.  

Moreover, the rigid pseudogroup $Q\ltimes Q$  associated to this action (where $Q$ acts on itself by left multiplication)
is a development of the pseudogroup defined by $P$, and therefore (1) implies (4). 
Finally,  Lemma \ref{l:trans}(4) tells us that (4) implies (3). 
\end{proof}

\noindent{\em Proof of Theorem \ref{t:ps}.}  
We follow the proof of Proposition \ref{prop}. Taking finite presentations 
in a class $\mathfrak P$ 
where there is a uniform solution to the word problem but no algorithm that
can determine if the groups presented have finite quotients or not, we construct the
permutoids $\pr$ as above, list the finitely many quotients of each 
$\pr$, then pass to the pseudogroups defined by these quotients,
retaining only those pseudogroups
that are rigid (an easy check).
If there were an algorithm that could determine developability for rigid 
pseudogroups, then we would apply it to the members of the resulting list and thereby
(in the light of Proposition \ref{p2}) determine which of the groups 
with presentations in $\mathfrak P$  have finite quotients. This would
contradict our choice of $\mathfrak P$, and therefore no such algorithm exists.
\qed 
 
\begin{remark}
The undecidability phenomena that we have articulated in the language of 
permutoids and pseudogroups can equally be expressed in the language of 
groupoids or inverse semigroups (cf.~\cite{Lawson}). In the context of inverse semigroups,
Steinberg \cite{strongly0isundec} has proved a result similar to Theorem \ref{t:ps} {(see Theorem \ref{t:me}).  See Theorem \ref{t:undec.strongF} for a reformulation of Theorem \ref{t:ps} in the language of inverse semigroups.  (See also
\cite{hall+}.)}~Also, instead of 
considering finite sets, one could consider sets of partial automorphisms of 
simplicial complexes, for example.
\end{remark} 

\appendix
\section{Inverse semigroups}

\setcounter{footnote}{0}
\renewcommand*{\thefootnote}{\fnsymbol{footnote}}

\begin{center}{\sc Benjamin Steinberg}\footnote{{Department of Mathematics, City College of New York, Convent Avenue at 138th Street, New York, New York 10031, USA
\email{bsteinberg@ccny.cuny.edu}}}\footnote{{This work was partially supported by a grant from the Simons Foundation(\#245268
to Benjamin Steinberg), the Binational Science Foundation of Israel and the US (\#2012080 to Benjamin Steinberg), a PSC-CUNY grant and a CUNY Collaborative Incentive Research Grant.}}
\end{center}
\medskip

\renewcommand*{\thefootnote}{\arabic{footnote}}
\setcounter{footnote}{1}

The purpose of this appendix is to recast {the above results}~in the language of inverse semigroups, which is where {I believe}~they are most naturally stated, and to show how they fit into a body of literature already devoted to the subject.

Inverse semigroups were developed independently by Preston and Wagner to handle partial symmetry in much the way that groups deal with symmetry.  They are algebraic structures abstracting pseudogroups of partial homeomorphisms of a topological space.  Formally, an \emph{inverse semigroup} is a semigroup $S$ such that, for each $s\in S$, there exists a unique element $s^*\in S$ (called the \emph{inverse} of $s$) such that \[ss^*s=s\quad \text{and}\quad s^*ss^*=s^*.\]  A good reference on inverse semigroup theory is Lawson's book~\cite{Lawson}.  Note that $s\mapsto s^*$ is an involution of $S$ that satisfies the additional property that $ss^*tt^*=tt^*ss^*$ for all $s,t\in S$. Consequently, the set $E(S)$ of idempotents of $S$ is a commutative subsemigroup of $S$.

Every group is an inverse semigroup.  A fundamental example is the pseudogroup $I_X$ of all partial homeomorphisms of a topological space $X$. Formally, a partial homeomorphism of $X$ is a homeomorphism $f\colon U\to V$ between open subsets of $X$.  The composition of partial homeomorphisms is defined where it makes sense: if $f\colon U\to V$ and $g\colon W\to Z$ are partial homeomorphisms of $X$, then their product in $I_X$ is the composition \[f\circ g\colon g\inv(U\cap Z)\to f(U\cap Z).\]  If $f\colon U\to V$ is a partial homeomorphism of $X$, then $f^{*}$ is the inverse mapping $f^{-1}\colon V\to U$.  
{Bridson and Wilton term}~an inverse submonoid of $I_X$ closed under taking restrictions a \emph{pseudogroup}.  Note that, unlike~
{Bridson and Wilton},~we allow an empty partial homeomorphism, which is the zero element of $I_X$.  When $X$ is discrete, we call a partial homeomorphism of $X$ a \emph{partial permutation}. In this case, $I_X$ is called the \emph{symmetric inverse monoid} on the set $X$.  The Preston--Wagner theorem~\cite{Lawson} asserts that every inverse semigroup can be faithfully represented as a semigroup of partial permutations of its underlying set.

Inverse semigroups are closely connected to \'etale groupoids and have recently played some role in the theory of $C^*$-algebras, in part due to this connection. Also, inverse semigroups are precisely the $*$-semigroups of partial isometries of a Hilbert space. Many naturally arising operator algebras, like Cuntz--Krieger algebras, are generated by partial isometries.
 See, for instance, the book of Paterson~\cite{Paterson}, the long paper of Exel~\cite{Exel} and the papers of Nica~\cite{Nica} and of Khoshkam and Skandalis~\cite{Skandalis}.

One of the fundamental problems in inverse semigroup theory is that of extending partial permutations to permutations with some additional constraints.  This theme can be found in Lawson's book~\cite{Lawson}, as well as papers like~\cite{MSW,ASUnd,Herwig,strongly0isundec,geoams,Ash,ehud,Coulbextend}, to name but a few. Many of these papers connect extending partial permutations of a finite set to permutations of a bigger finite set, subject to constraints, to the profinite topology on appropriate groups.  The results {in the main body of this paper}~fit squarely into this body of work. 

There is a natural partial order on any inverse semigroup $S$, generalizing the restriction ordering on the pseudogroup $I_X$.  Namely, one puts $s\leq t$ if and only if $s=te$ for some idempotent $e\in E(S)$.  Moreover, the multiplication and the involution are order preserving. The product of two idempotents is their meet in this order.  To every inverse semigroup is associated a group $G(S)$, called its \emph{maximal group image} or \emph{group of germs}, defined by identifying two elements with a common lower bound.
  For instance, the  group of germs of the inverse monoid of isomorphisms between finite index subgroups of a group $G$ is called its abstract commensurator.

Let $\sigma\colon S\to G(S)$ be the canonical surjection.  Clearly, $E(S)\subseteq \sigma\inv(1)$.  One says that $S$ is \emph{$E$-unitary} if $\sigma\inv(1)=E(S)$ or, equivalently, if $s\geq e$ for some idempotent $e$, then $s$ is an idempotent.  If $S\subseteq I_X$ is an inverse subsemigroup  closed under taking non-empty restrictions, then $S$ is $E$-unitary if and only if each element of $S$ fixing a point of $X$ fixes its entire domain.

An \emph{$F$-inverse monoid} is an inverse monoid $M$ such that each element $m\in M$ is below a unique maximal element of $M$.  An $F$-inverse monoid is $E$-unitary since $1$ is the unique maximal element above any idempotent and only idempotents are below $1$. Let $\max\colon M\to M$ be the map sending an element to the unique maximal element above it.  Then $G(S)$ can be identified with $\max(M)$ equipped with the product $s\odot t=\max(st)$. The free inverse monoid is an important example of an $F$-inverse monoid~\cite{Lawson}.  

Let us consider examples from other areas of mathematics.
If $X$ is an irreducible algebraic variety, then the inverse monoid of isomorphisms between open subvarieties of $X$ is an $F$-inverse monoid and the group of birational automorphisms of $X$ can be identified with the group of germs of this monoid.  Examples of $F$-inverse monoids also occur in geometric group theory.  Birget~\cite{birgetthomp} observed that Thompson's group $V$ is the group of germs of the $F$-inverse monoid of all partial isomorphisms between finitely generated essential right ideals of a free monoid on $2$ letters. 


An inverse semigroup $S$ with a zero element cannot be $E$-unitary unless it consists only of idempotents because $s\geq 0$ for all $s\in S$.  Various attempts have been made to extend the notion to the setting of inverse semigroups with zero.  The weakest notion is that of an $E^*$-unitary inverse semigroup.  An inverse semigroup with zero  is \emph{$E^*$-unitary} if $s\geq e\neq 0$ with $e$ an idempotent implies that $s$ is an idempotent.  (Note that earlier papers also used the term ``$0$-$E$-unitary.'')  A pseudogroup $S\subseteq I_X$ is $E^*$-unitary if and only if each element of $S$ that fixes a point, fixes its entire domain. Equivalently, a pseudogroup is $E^*$-unitary if and only if whenever two elements agree at a point, they agree on the intersection of their domains.

The analogue of developability in inverse semigroup theory is the notion of a strongly $E^*$-unitary inverse semigroup, introduced independently in~\cite{FountEunit} and~\cite{Lawson0eunit}.   A good survey article on the subject is Lawson~\cite{Eunitsurvey}.   Let $S$ be an inverse semigroup with zero.  Then a \emph{partial homomorphism} from $S$ to a group $G$ is a mapping $\p\colon S\setminus \{0\}\to G$ such that $\p(st)=\p(s)\p(t)$ whenever $st\neq 0$.  
It is easily checked that $E(S)\setminus \{0\}\subseteq \pinv(1)$
 and, consequently, $\p$ is constant on connected components of the Hasse diagram of $S\setminus \{0\}$. 
One also checks that $\p(s^*)=\p(s)\inv$. We say that $\p$ is \emph{idempotent pure} if $\pinv(1)=E(S)\setminus \{0\}$.  An inverse semigroup with zero is  \emph{strongly $E^*$-unitary} if it admits an idempotent pure partial homomorphism to a group.

  Every inverse semigroup $S$ with zero has a \emph{universal group} $U(S)$, equipped with a partial homomorphism $\theta\colon S\setminus \{0\}\to U(S)$, such that every other partial homomorphism from $S$ to a group factors uniquely through $\theta$.  One can construct $U(S)$ as the group generated by $S\setminus \{0\}$ with relations that put the word $(s,t)$ equal to the symbol $(st)$ whenever $st\neq 0$ in $S$.  See~\cite{strongly0isundec,Eunitsurvey} for details.  It is easy to see that $S$ is strongly $E^*$-unitary if and only if the universal partial homomorphism $\theta$ is idempotent pure.  Strongly $E^*$-unitary inverse semigroups are clearly $E^*$-unitary since if $s\geq e\neq 0$ with $e\in E(S)$, then $\theta(s)=\theta(e)=1$ and hence $s\in E(S)$ because $\theta$ is idempotent pure.

 Alternatively, strongly $E^*$-unitary inverse semigroups can be described as Rees quotients of $E$-unitary inverse semigroups by ideals (cf.~\cite{strongly0isundec}).  An ideal $I$ in a semigroup $S$ is a non-empty subset such that $SI\subseteq I$ and $IS\subseteq I$.  The \emph{Rees quotient} $S/I$ is the quotient of $S$ by the congruence which identifies $I$ to a single element (which will be the zero of the quotient).

We say that $S$ is strongly $E^*$-unitary over a class $\mathcal C$ of groups if it admits an idempotent pure partial homomorphism to a group in $\mathcal C$.

Examples of strongly $E^*$-unitary inverse semigroups abound in $C^*$-algebra theory.  For instance, the graph inverse semigroups associated to Cuntz--Krieger $C^*$-algebras~\cite{Paterson} are strongly $E^*$-unitary, as are tiling semigroups~\cite{tiling} and Toeplitz inverse semigroups~\cite{Nica}.  The property of being strongly $E^*$-unitary for an inverse semigroup is closely connected to representability of its associated operator algebras as cross products or partial cross products of commutative $C^*$-algebras with groups.  See~\cite{Milan} for details.

The author's paper~\cite{strongly0isundec} provides a connection with geometric group theory.  Following Stallings~\cite{Stallings}, a morphism of graphs $\p\colon \Gamma\to \Gamma'$ is an \emph{immersion} if it is injective on each star.  Let $A$ be a set.  Then there is a bijection between immersions over the bouquet of $A$ circles and $A$-generated inverse semigroups of partial permutations, as was pointed out by Margolis and Meakin~\cite{MMimmersions}.   The point is that the monodromy action is only partially defined: each element of $A$ has at most one lift starting at any vertex under an immersion and the initial vertex is sent to the terminal vertex of the lift, when defined, by the monodromy action.

 In~\cite{strongly0isundec} the author shows that a connected graph immersion over a bouquet can be extended to a regular covering map if and only if the corresponding inverse semigroup generated by the monodromy action is strongly $E^{*}$-unitary and the stabilizer of each vertex consists of idempotents.  It was also shown that a finite connected graph immersion over a bouquet can be extended to a finite-sheeted regular covering if and only if the inverse semigroup generated by the monodromy action is strongly $E^{*}$-unitary over the class of finite groups and each vertex stabilizer consists of idempotents.

The author proved in~\cite{strongly0isundec} the following theorem.

\begin{theorem}[Steinberg]\label{t:me}
The following algorithmic problems are undecidable for finite $E^*$-unitary inverse semigroups.
\begin{enumerate}
\item  Determining whether $S$ is strongly $E^*$-unitary.
\item Determining whether  $S$ is strongly $E^*$-unitary over finite groups.
\item Determining whether $S$ is a Rees quotient of an $E$-unitary inverse semigroup.
\item Determining whether  $S$ is a Rees quotient of a finite $E$-unitary inverse semigroup.
\end{enumerate}
\end{theorem}

The proof of the first and third undecidability results use the undecidability of the word problem for groups, whereas the proof of the second and fourth use the undecidability of the uniform word problem for finite groups~\cite{slobod}.

The analogue of $F$-inverse monoids in the context of monoids with zero was first considered by Nica in his work on operator algebras~\cite{Nica} and also is featured in the work of Khoshkam and Skandalis~\cite{Skandalis}.   See also~\cite{Eunitsurvey}.   An inverse monoid $M$ with zero is an \emph{$F^*$-inverse monoid} if each non-zero element of $M$ is below a unique maximal element.  Since each non-zero idempotent of $M$ is below $1$, it follows that $F^*$-inverse monoids are $E^*$-unitary.  A pseudogroup is an $F^*$-inverse monoid if and only if it is a rigid pseudogroup in the sense {of Section \ref{sec: Rigid development}}.


The main results {above}~can be viewed as proving the analogues of the second and fourth
items of Theorem~\ref{t:me} in the context of $F^*$-inverse monoids instead of $E^*$-unitary inverse semigroups.   Since being $F^*$-inverse is a more restrictive condition, this makes the results {of Bridson and Wilton}~stronger than Theorem~\ref{t:me}.

In the context of the fourth item, we need a definition.
It is natural to call an inverse monoid $M$ with zero \emph{strongly $F^*$-inverse}\footnote{Lawson uses strongly $F^*$-inverse to mean the conjunction of $F^*$-inverse and strongly $E^*$-unitary in~\cite{Eunitsurvey}, but our notion seems more aptly named.} if there is a partial homomorphism $\p\colon M\setminus \{0\}\to G$ with $G$ a group such that each non-empty fiber of $\p$ has a maximum element.  In this case, $\p$ must be idempotent pure (since $1$ is maximal in its fiber) and $M$ is both $F^*$-inverse and strongly $E^*$-unitary.  The operator algebras associated to strongly $F^*$-inverse monoids in our sense will be strongly Morita equivalent to full cross products of a group with a commutative $C^*$-algebra~\cite{Skandalis,Milan}, a feature not necessarily enjoyed by algebras of strongly $E^*$-unitary inverse semigroups.
 If $\mathcal C$ is a class of groups, we say that $M$ is \emph{strongly $F^*$-inverse over $\mathcal C$} if it admits a partial homomorphism to a group in $\mathcal C$ such that each non-empty fiber has a maximum element.  The interest in strongly $F^*$-inverse monoids over the class of finite groups stems from  the following proposition.

\begin{proposition}\label{p:ideal.form}
Let $M$ be a finite inverse monoid with zero.  Then $M$ is strongly $F^*$-inverse over the class of finite groups if and only if $M$ is isomorphic to a Rees quotient of a finite $F$-inverse monoid by an ideal.
\end{proposition}
\begin{proof}
Suppose first that $M\cong S/I$ with $S$ a finite $F$-inverse monoid.  Let $\sigma\colon S\to G(S)$ be the maximal group image homomorphism.  Note that $G(S)$ is finite and that each fiber of $\sigma$ has a maximum element (cf.~\cite{Lawson}).   We can identify $M\setminus\{0\}$ with the complement $S\setminus I$ of $I$.  Then $\sigma|_{S\setminus I}$ is a partial homomorphism (cf.~\cite{strongly0isundec}) and each non-empty fiber clearly still has a maximum element. Thus $M$ is strongly $F^*$-inverse over finite groups.

For the converse, let us suppose that $M$ admits a partial homomorphism  $\p\colon M\setminus \{0\}\to G$ with $G$ a finite group such that each non-empty fiber contains a maximum element. Put \[S=(\{0\}\times G)\cup \{(m,\p(m))\mid m\in M\setminus \{0\}\}.\]  The reader easily verifies that $S$ is a submonoid of $M\times G$ and $I=\{0\}\times G$ is an ideal of $M$.   Moreover, trivially $M\cong S/I$.  Of course, $S$ is finite.  It remains to observe that $S$ is an $F$-inverse monoid. We shall use that the natural partial order on a group is equality, the natural partial order on product is the product order and that the natural partial order on a subsemigroup is induced by that of the ambient semigroup. First note that if $g\notin \p(M\setminus \{0\})$, then clearly $(0,g)$ is a maximal element of $S$.  If $g\in \p(M\setminus \{0\})$, let $\max(g)$ be the maximum element of $\pinv(g)$.  Then $(\max(g),g)$ is the unique maximal element of $S$ above $(0,g)$ for $g\in \p(M\setminus \{0\})$ and
$(\max(\p(m)),\p(m))$ is the unique maximal element  above $(m,\p(m))$ for $m\in M\setminus \{0\}$.  Thus $S$ is an $F$-inverse monoid.
\end{proof}

A minor modification of the proof shows that  strongly $F^*$-inverse monoids are precisely the Rees quotients of $F$-inverse monoids.



{Let us next verify that a rigid pseudogroup $S$ is developable in the sense of Section \ref{sec: Rigid development} if and only if it is strongly $E^*$-unitary over the class of finite groups.  We say the development $G\ltimes Y$ of $S\subseteq I_X$ is \emph{faithful} if distinct maximal elements of $S$ are extended by distinct elements of $G$.  Observe that if there is some element $x\in X$ common to the domain of all maximal elements of $S$, then the development is automatically faithful.  Indeed, if $s$ and $t$ are distinct maximal elements of $S$ extended by the same element $g\in G$, then $sx=gx=tx$.  But then $s|_{\{x\}}=t|_{\{x\}}$ has two common maximal upper bounds, $s$ and $t$, a contradiction.}

\begin{proposition}\label{p:translate}
Let $X$ be a finite set and $S\subseteq I_X$ a rigid pseudogroup.  Then $S$ is developable if and only if $S$ is strongly $E^*$-inverse over finite groups.  Moreover, $S$ is strongly $F^*$-inverse over finite groups if and only if $S$ has a faithful development.
\end{proposition}
\begin{proof}
Suppose first that $S$ is developable and let $Y\supseteq X$ be a finite set and $G$ a finite group acting freely on $Y$ such that each element of $S$ is a restriction of an element of $G$.  Because the action is free, for each $s\in S\setminus \{0\}$, there is a unique element $\p(s)\in G$ such that $s\leq \p(s)$ (i.e., $s$ is a restriction of $\p(s)$).  If $s,s'\in S$ with $ss'\neq 0$, then $ss'\leq \p(s)\p(s')$ and so $\p(ss')=\p(s)\p(s')$.  Thus $\p\colon S\setminus \{0\}\to G$ is a partial homomorphism. Suppose that $s\in \pinv(1)$.  Then $s$ is a restriction of the identity and hence an idempotent.  Thus $\p$ is idempotent pure and so we may conclude that $S$ is strongly $E^*$-unitary over finite groups. If the development is faithful, then $\p$ is injective on maximal elements of $S\setminus \{0\}$ and hence each non-empty fiber of $\p$ has a maximum element and so $S$ is strongly $F^*$-inverse over finite groups.

To prove the converse, assume that $S$ is strongly $E^*$-unitary over finite groups and let $\p\colon S\setminus \{0\}\to G$ be an idempotent pure partial homomorphism with $G$ a finite group.  Fix a transversal $T$ to the set of orbits of $S$ on $X$ and let $Y=G\times T$ with the action $g'(g,t)=(g'g,t)$.  Then $Y$ is a finite set acted upon freely by $G$.  Define $\psi\colon X\to Y$ by $\psi(st)=(\p(s),t)$ for $t\in T$ and $s\in S$ (using that $X=ST$). First we verify that $\psi$ is well defined.  If $st=s't'$ with $s,s'\in S$ and $t,t'\in T$, then we must have $t=t'$ and so $st=s't$.  Therefore, $s,s'$ have a common upper bound $\til s$ because $S$ is rigid (namely the maximal element above $s|_{\{t\}}=s'|_{\{t\}}$).  But then $\p(s)=\p(\til s)=\p(s')$ and so $\psi$ is well defined.  To see that $\psi$ is injective, suppose that $\psi(x)=\psi(y)$.  Write $x=st$ and $y=s't'$ with $s,s'\in S$ and $t,t'\in T$.  Then $(\p(s),t)=(\p(s'),t')$ and so $t=t'$, $\p(s)=\p(s')$.  Note that $s's^*x=s's^*st=s't=y$ and so $s's^*\neq 0$.  Therefore, we have $\p(s's^*)=\p(s')\p(s^*)=\p(s')\p(s)\inv=1$.  As $\p$ is idempotent pure, we conclude that $s's^*$ is idempotent and hence $y=s's^*x=x$ as an idempotent partial permutation fixes its domain.

It remains to verify that $\p(s')$ extends $s'$ for $s'\in S\setminus \{0\}$.  So let $x\in X$ belong to the domain of $s'$ and write $x=st$ with $s\in S$ and $t\in T$.  Then $s'x=s'st$ and so $s's\neq 0$.   Therefore, we have
\begin{eqnarray*}
\p(s')\psi(x)&=&\p(s')(\p(s),t) = (\p(s')\p(s),t)\\
&=&(\p(s's),t)=\psi(s'st)=\psi(s'x)
\end{eqnarray*}
and so the action of $\p(s')$ extends the action of $s'$ (after identifying $X$ with a subset of $Y$ via $\psi$).  If, in addition, each fiber of $\p$ has a maximum element, then $\p$ is injective on maximal elements of $S\setminus \{0\}$ and hence the development is faithful.
\end{proof}

Next we shall show that there is no real difference between studying rigid pseudogroups and arbitrary $F^*$-inverse monoids in our context.

\begin{lemma}\label{l:extend.partial}
Let $T$ be an $F^*$-inverse monoid and let $S$ be an inverse submonoid with zero of $T$ containing all the maximal elements of $T$.  Then each partial homomorphism $\p$ from $S$ to a group $G$ extends uniquely to $T$.  Moreover, if $\p$ is idempotent pure, then so is the extension and if $\p$ is injective on maximal elements of $S\setminus \{0\}$, then the same is true for the extension.
\end{lemma}
\begin{proof}
Let $\max\colon T\setminus \{0\}\to T\setminus \{0\}$ be the mapping sending a non-zero element of $T$ to the unique maximal element above it. By assumption, the image of $\max$ is contained in $S\setminus \{0\}$.  Thus if $\Phi\colon T\setminus \{0\}\to G$ is an extension of $\p$, we must have $\Phi(t)=\Phi(\max(t))=\p(\max(t))$ and so $\Phi$ is unique, if it exists. We must now show that $\Phi(t)=\p(\max(t))$ is a partial homomorphism.  Suppose that $t,t'\in T\setminus \{0\}$ with $tt'\neq 0$ and put $s=\max(t)$ and $s'=\max(t')$.  Then $0\neq tt'\leq ss'$ and so $\max(tt')=\max(ss')$.  Thus \[\Phi(t)\Phi(t')=\p(s)\p(s')=\p(ss')=\p(\max(ss'))=\Phi(tt')\] as required.

Suppose, in addition, that $\p$ is idempotent pure and $\Phi(t)=1$.  Then $\p(\max(t))=1$ and so $\max(t)$ is an idempotent.  Therefore, $t$ is an idempotent as the set of idempotents is an order ideal in an inverse semigroup. Thus $\Phi$ is idempotent pure.  Clearly, $\Phi$ separates the maximal non-zero elements if $\p$ does as $S$ and $T$ have the same maximal elements.
\end{proof}

Now we show that every $F^*$-inverse monoid embeds into a rigid pseudogroup satisfying the conditions of Lemma~\ref{l:extend.partial}.

\begin{proposition}\label{p:make.rigid}
Let $S$ be an $F^*$-inverse monoid.  Then there is a a rigid pseudogroup $T\subseteq I_{S\setminus \{0\}}$ such that $S$ is an inverse submonoid with zero of $T$ containing all the maximal elements.  In particular, $S$ is finite if and only if $T$ is finite.  Moreover, $S$ is strongly $E^*$-unitary (respectively, strongly $F^*$-inverse) over a class of groups $\mathcal C$ if and only if $T$ is strongly $E^*$-unitary (respectively, strongly $F^*$-inverse) over $\mathcal C$.
\end{proposition}
\begin{proof}
The inverse monoid $S$ acts faithfully on its underlying set via the Preston-Wagner representation $\rho\colon S\to I_S$ where $\rho(s)\colon s^*sS\to ss^*S$ is defined by $x\mapsto sx$~\cite{Lawson}. Clearly, $0$ is fixed by each element of $\rho(S)$.  Thus $S\setminus \{0\}$ is invariant under $\rho(S)$ and the restricted representation $\rho'\colon S\to I_{S\setminus \{0\}}$ is still faithful but has $\rho'(0)$ the empty map. Let $T$ be the pseudogroup of all restrictions of elements of $\rho'(S)$. Clearly, $\rho'(S)$ and $T$ have the same maximal elements by construction and each element of $T$ is below a maximal element of $\rho'(S)$ (as $S$ is an $F^*$-inverse monoid).  We claim that $T$ is $F^*$-inverse, i.e., a rigid pseudogroup.  To see this, we need to show that if $s,s'\in S$ are distinct maximal elements, then $\rho'(s)$ and $\rho'(s')$ have no common, non-empty restriction. Equivalently, we must show that $\rho'(s)$ and $\rho'(s')$ do not agree on any element common to their domains.  So suppose that $x\in S\setminus \{0\}$ belongs to the domain of both $\rho'(s)$ and $\rho'(s')$ and they agree on $x$.  Then $sx=s'x\neq 0$ and hence $sxx^*=s'xx^*\neq 0$ (since $s'xx^*x=s'x$).  As $xx^*$ is an idempotent, we have that $sxx^*=s'xx^*$ is a common non-zero lower bound of $s$ and $s'$ in $S\setminus \{0\}$.  This contradicts that $S$ is an $F^*$-inverse monoid.  We conclude that $T$ is a rigid pseudogroup.  From now on we identify $S$ with the submonoid $\rho'(S)$ of $T$.

Clearly, if $\p$ is partial homomorphism from $T$ to a group $G$ in $\mathcal C$ that is idempotent pure (respectively, has maximum elements in non-empty fibers) then the restriction of $\p$ to $S$ has the same property.  Thus if $T$ is strongly $E^*$-unitary (respectively, strongly $F^*$-inverse) over $\mathcal C$, then so is $S$.  Conversely, if $S$ admits an partial homomorphism to a group in $\mathcal C$ that is idempotent pure (respectively, has maximum elements in non-empty fibers), then so does $T$ by Lemma~\ref{l:extend.partial}.  This completes the proof.
\end{proof}

In light of Propositions~\ref{p:translate} and~\ref{p:make.rigid}, we can reformulate {Theorem \ref{t:ps}}~in the following equivalent way (contrast with Theorem~\ref{t:me}).

\begin{theorem}\label{t:undec.over.finite}
It is undecidable whether a finite $F^*$-inverse monoid is strongly $E^*$-unitary over finite groups.
\end{theorem}

The proof  of Theorem~{\ref{t:ps}}~uses rigid pseudogroups whose maximal elements are finite quotients of Cameron permutoids.  These rigid pseudogroups have the property that all maximal elements have the same domain.  Thus they are developable if and only if they have a faithful development by the remark preceding Proposition~\ref{p:translate}.
Therefore, Theorem~{\ref{t:ps}}~and Proposition~\ref{p:translate}  yield the following undecidability result.

\begin{theorem}\label{t:undec.strongF}
It is undecidable whether a finite $F^*$-inverse monoid is strongly $F^*$-inverse over finite groups.
\end{theorem}

Proposition~\ref{p:ideal.form} lets us reformulate Theorem~\ref{t:undec.strongF} in  a more appealing manner.

\begin{theorem}\label{t:undec.rees}
It is undecidable whether a finite $F^*$-inverse monoid is a Rees quotient of a finite $F$-inverse monoid by an ideal.
\end{theorem}

\bibliographystyle{plain}

\begin{thebibliography}{AB}
\bibitem{Ash}
C.~J. Ash.
\newblock Inevitable graphs: a proof of the type {${\rm II}$} conjecture and
  some related decision procedures.
\newblock {\em Internat. J. Algebra Comput.}, 1(1):127--146, 1991.

\bibitem{ASUnd}
K.~Auinger and B.~Steinberg.
\newblock On the extension problem for partial permutations.
\newblock {\em Proc. Amer. Math. Soc.}, 131(9):2693--2703 (electronic), 2003.

\bibitem{baer}
R.~Baer,
{\em Free sums of groups and their generalizations II, III},
Amer. J. Math. {\bf{72}} (1950), 625--670.

\bibitem{baumslag_non-cyclic_1969}
Gilbert Baumslag.
\newblock A non-cyclic one-relator group all of whose finite quotients are
  cyclic.
\newblock {\em J. Austral. Math. Soc.}, 10:497--498, 1969.

\bibitem{birgetthomp}
J.-C. Birget.
\newblock The groups of {R}ichard {T}hompson and complexity.
\newblock {\em Internat. J. Algebra Comput.}, 14(5-6):569--626, 2004.
\newblock International Conference on Semigroups and Groups in honor of the
  65th birthday of Prof. John Rhodes.

\bibitem{BWprotrivial} 
M.R. Bridson and H.~Wilton, 
\newblock The triviality problem for profinite completions.
\newblock {\em Invent. Math.}, 202(2):839--874, 2015.

\bibitem{FountEunit}
S.~Bulman-Fleming, J.~Fountain, and V.~Gould.
\newblock Inverse semigroups with zero: covers and their structure.
\newblock {\em J. Austral. Math. Soc. Ser. A}, 67(1):15--30, 1999.
   
\bibitem{cameron_extending_2004}
Peter Cameron.
\newblock Extending partial permutations.
\newblock \url{http://www.maths.qmul.ac.uk/\textasciitilde pjc/odds/partial.pdf},
  2004.  
  
\bibitem{Coulbextend}
T.~Coulbois.
\newblock Partial action of groups on relational structures: a connection
  between model theory and profinite topology.
\newblock In {\em Semigroups, algorithms, automata and languages ({C}oimbra,
  2001)}, pages 349--361. World Sci. Publ., River Edge, NJ, 2002.
 
\bibitem{epstein_word_1992}
David B.~A. Epstein, James~W. Cannon, Derek~F. Holt, Silvio V.~F. Levy,
Michael~S. Paterson, and William~P. Thurston, \emph{Word processing in
groups}, Jones and Bartlett Publishers, Boston, MA, 1992.

\bibitem{Exel}
R.~Exel.
\newblock Inverse semigroups and combinatorial {$C\sp \ast$}-algebras.
\newblock {\em Bull. Braz. Math. Soc. (N.S.)}, 39(2):191--313, 2008.

\bibitem{hall+}
T.E.~Hall,  S.I.~Kublanovskii, S.~Margolis, M.V.~Sapir, P.G.~Trotter,  
{\em Algorithmic problems for finite groups and finite 0-simple semigroups}, 
J. Pure Appl. Algebra {\bf{119}} (1997), 75--96. 

\bibitem{Herwig}
B.~Herwig and D.~Lascar.
\newblock Extending partial automorphisms and the profinite topology on free
  groups.
\newblock {\em Trans. Amer. Math. Soc.}, 352(5):1985--2021, 2000.

\bibitem{ehud}
E.~Hrushovski.
\newblock Extending partial isomorphisms of graphs.
\newblock {\em Combinatorica}, 12(4):411--416, 1992.

\bibitem{tiling}
J.~Kellendonk and M.~V. Lawson.
\newblock Tiling semigroups.
\newblock {\em J. Algebra}, 224(1):140--150, 2000.

\bibitem{Skandalis}
M.~Khoshkam and G.~Skandalis.
\newblock Regular representation of groupoid {$C\sp *$}-algebras and
  applications to inverse semigroups.
\newblock {\em J. Reine Angew. Math.}, 546:47--72, 2002.

\bibitem{Lawson}
M.~V. Lawson.
\newblock {\em Inverse semigroups}.
\newblock World Scientific Publishing Co. Inc., River Edge, NJ, 1998.
\newblock The theory of partial symmetries.

\bibitem{Lawson0eunit}
M.~V. Lawson.
\newblock The structure of {$0$}-{$E$}-unitary inverse semigroups. {I}. {T}he
  monoid case.
\newblock {\em Proc. Edinburgh Math. Soc. (2)}, 42(3):497--520, 1999.

\bibitem{Eunitsurvey}
M.~V. Lawson.
\newblock {$E^*$}-unitary inverse semigroups.
\newblock In {\em Semigroups, algorithms, automata and languages ({C}oimbra,
  2001)}, pages 195--214. World Sci. Publ., River Edge, NJ, 2002.

\bibitem{MMimmersions}
S.~W. Margolis and J.~C. Meakin.
\newblock Free inverse monoids and graph immersions.
\newblock {\em Internat. J. Algebra Comput.}, 3(1):79--99, 1993.

\bibitem{MSW}
S.~W. Margolis, M.~Sapir, and P.~Weil.
\newblock Closed subgroups in pro-{$\mathbf V$} topologies and the extension
  problem for inverse automata.
\newblock {\em Internat. J. Algebra Comput.}, 11(4):405--445, 2001.

\bibitem{Milan}
D.~Milan and B.~Steinberg.
\newblock On inverse semigroup {$C^*$}-algebras and crossed products.
\newblock {\em Groups Geom. Dyn.}, 8(2):485--512, 2014.

\bibitem{Nica}
A.~Nica.
\newblock On a groupoid construction for actions of certain inverse semigroups.
\newblock {\em Internat. J. Math.}, 5(3):349--372, 1994.

\bibitem{Paterson}
A.~L.~T. Paterson.
\newblock {\em Groupoids, inverse semigroups, and their operator algebras},
  volume 170 of {\em Progress in Mathematics}.
\newblock Birkh{\"a}user Boston Inc., Boston, MA, 1999.
  
\bibitem{rimlinger}
F.~Rimlinger, 
{\em Pregroups and Bass-Serre theory}, Mem. Amer. Math. Soc. {\bf 65} (1987), no. 361.  

\bibitem{slobod}
A.~M. Slobodsko{\u\i}.
\newblock Undecidability of the universal theory of finite groups.
\newblock {\em Algebra i Logika}, 20(2):207--230, 251, 1981.

\bibitem{stall_pree} J.R.~Stallings,
{\em Group theory and three-dimensional manifolds}, Yale Univ. Press, New Haven, Conn., 1971.

\bibitem{Stallings}
J.~R. Stallings.
\newblock Topology of finite graphs.
\newblock {\em Invent. Math.}, 71(3):551--565, 1983.

\bibitem{geoams}
B.~Steinberg.
\newblock Finite state automata: a geometric approach.
\newblock {\em Trans. Amer. Math. Soc.}, 353(9):3409--3464 (electronic), 2001.

\bibitem{strongly0isundec}
B.~Steinberg.
\newblock The uniform word problem for groups and finite {R}ees quotients of
  {$E$}-unitary inverse semigroups.
\newblock {\em J. Algebra}, 266(1):1--13, 2003.



\end{thebibliography}

\end{document}